\title{Measure finite topology on the ring of measurable functions}
\theoremstyle{plain}
	\newtheorem{theorem}{Theorem}[section]
	\title{Measure finite topology on the rings of measurable functions}
	\theoremstyle{plain}
	\newtheorem{lemma}[theorem]{Lemma}
	\theoremstyle{definition}
	\newtheorem{definition}[theorem]{Definition}
	\newtheorem{remark}[theorem]{Remark}
	\newtheorem{counter example}[theorem]{Counter Example}
	\newtheorem{corollary}[theorem]{Corollary}
	\newtheorem{example}[theorem]{Example}
	\numberwithin{equation}{section}
	\DeclareMathOperator{\cl}{cl}
	\author[S. Dey]{Soumajit Dey}	\address{Department of Pure Mathematics, University of Calcutta, 35, Ballygunge Circular Road, Kolkata 700019, West Bengal, India}	\email{deysoumajit8@gmail.com}
	\author[S. K. Acharyya]{Sudip Kumar Acharyya}	\address{Department of Pure Mathematics, University of Calcutta, 35, Ballygunge Circular Road, Kolkata
		700019, West Bengal, India}	\email{sdpacharyya@gmail.com}
	\author[D. Mandal]{Dhananjoy Mandal} \address{Department of Pure Mathematics, University of Calcutta, 35 , Ballygunge Circular Road, Kolkata 700019, West Bengal, India}  \email{dmandal.cu@gmail.com / dmpm@caluniv.ac.in}
	\keywords{Atomic measures, $F_\mu$-topology, hemifinite measure, controlling number of a $\sigma$-algebra, path connected, extremally disconnected, countable chain condition}
	\subjclass[2020]{Primary 54C40; Secondary  46E30}
\begin{document}
		\title [Measure finite topology on the ring of measurable functions]{Measure finite topology on the ring of measurable functions }
		\thanks {The first author extends immense gratitude and thanks to the University Grants Commission, New Delhi, for the award of research fellowship (NTA Ref. No. 211610214962).}
	\maketitle
	\begin{abstract}
		Let $\mathcal{M}(X,\mathcal{A},\mu)$ be the ring of all real-valued measurable functions constructed over a measure space $(X,\mathcal{A},\mu)$. A topology on $\mathcal{M}(X,\mathcal{A},\mu)$, called the {$F_\mu$-topology} weaker than the { $U_\mu$-topology} is introduced. It is realized that the {component}, the {quasi component} and the {path component }in this {$F_\mu$-topology} are identical. It turns out that the {$F_\mu$-topology} on $\mathcal{M}(X,\mathcal{A},\mu)$ becomes {connected} if and only if it is {path connected} if and only if $\mu$ is an {atomic measure} of a special type. It is also proved that the {$F_\mu$-topology} is {first countable} when and only when $\mu$ is a {hemifinite measure.} Finally, it is shown that the {second countability} of the {$F_\mu$-topology}  is equivalent to the {hemifiniteness} of the measure $\mu$ together with the {countable chain condition} of the {$F_\mu$-topology}.
	\end{abstract}
	\section{ Introduction}
	
	The topology of {uniform convergence} or the {$u$-topology} and the {fine-topology} or the \text{$m$-topology} on the ring $C(X)$ of all real-valued continuous functions over a Tychonoff space $X$ are extensively studied and there is a huge literature dealing with various problems related to these topics. For an introduction to these two topologies see Problem 2M and 2N in \cite{11}. Appropriate measure-theoretic analogue of these two topologies are defined on the ring $\mathcal{M}(X, \mathcal{A}, \mu)$ of all real-valued measurable functions constructed over a measure space $(X, \mathcal{A}, \mu)$, rather recently (see  \cite{4}, \cite{1},  \cite{5}, \cite{2}, \cite{3}, \cite{6} in this regard). This article aims at continuing a similar line of study. Indeed, we are strongly motivated by the following fact: the ring $C(X)$ with compact-open topology is first countable if and only if $X$ is a hemicompact space \cite{7}. Our intention is to initiate a measure-theoretic counterpart of both the compact-open topology and the hemicompactness of a topological space, in order to achieve amogest other results, an analogous fact in a different setting. For any $f \in \mathcal{M}(X, \mathcal{A}, \mu)$, $A \in \mathcal{A}$ with $\mu(A) < \infty$ and $\epsilon > 0$, we set
\[B(f, A, \epsilon) = \{ g \in \mathcal{M}(X, \mathcal{A}, \mu) : \text{there exists } A_g \in \mathcal{A} \text{ with } \mu( A_g) = 0 \text{ such that }\]\[ \sup_{x \in A\setminus A_g} |f(x) - g(x)| < \epsilon \}.\]
Then it is not hard to check that the family 
	\[
	\mathcal{S} = \{ B(f, A, \epsilon) : f \in \mathcal{M}(X, \mathcal{A}, \mu),\, A \in \mathcal{A} \text{ with } \mu(A) < \infty,\, \epsilon > 0 \}
	\]
	makes an open base for some topology on $\mathcal{M}(X, \mathcal{A}, \mu)$, which makes it a topological group. This topology, let us call the measure finite topology or the $F_\mu$-topology and designate $\mathcal{M}(X,\mathcal{A},\mu)$ equipped with this topology by the symbol $\mathcal{M}_F(X,\mathcal{A},\mu)$. It is easy to note that if $\mu(A) = 0$, then the basic open set $B(f, A, \epsilon)$ becomes the whole space $\mathcal{M}(X, \mathcal{A}, \mu)$ for any choice of $f$ and $\epsilon$. Consequently, if $\mu(X) = 0$, then $\mathcal{M}_F(X, \mathcal{A}, \mu)$ reduces to the indiscrete topological space. To avoid this undesirable situation, we assume that $\mu(X) >0$, we make one further assumption viz., that every measurable set (i.e., a member of $\mathcal{A}$) with infinite $\mu$-measure contains a measurable set with strictly positive finite measure. This second hypothesis is vacuously satisfied in a finite measure space i.e., for which $0<\mu(X)<\infty$. In this article, a few natural interplay existing between the topological structure of $\mathcal{M}_F(X, \mathcal{A}, \mu)$ and the measure $\mu$ on the measurable space $(X,\mathcal{A})$, comes into the surface. Thus, for instance, $\mathcal{M}_F(X, \mathcal{A}, \mu)$ becomes {path connected (or, connected)} if and only if $\mu$ is an {atomic measure} of a special type (Theorem \ref{equivpath}). If $\mu$ is a {nonatomic measure} (say the Lebesgue measure on the $\sigma$-algebra of all Lebesgue measurable subsets restricted on a measurable subset of $\mathbb{R}$), then $\mathcal{M}_F(X, \mathcal{A}, \mu)$ becomes first countable if and only if $\mu(X) < \infty$ (Theorem \ref{firstcounttable}).
	
	Now let us briefly narrate the organization of this article highlighting mainly on the principal results. An $f \in \mathcal{M}(X, \mathcal{A}, \mu)$ is called {essentially bounded} on a measurable set $G\in \mathcal{A}$, if $f$ is bounded on $G$ almost everywhere (with respect to the measure $\mu$). Suppose $L_G^\infty(X, \mathcal{A}, \mu)$ is the aggregate of all members of $\mathcal{M}(X, \mathcal{A}, \mu)$ which are essentially bounded on $G$.
	
	In Section 2, we prove amongst others that for any $G \in \mathcal{A}$, $L_G^\infty(G, \mathcal{A}, \mu)$ is a clopen set in the space $\mathcal{M}_F(X, \mathcal{A}, \mu)$. Using this fact, we compute in Section 3, the {component}, {the path component} and the {quasi component} of the function $\underline{0}$ in this space. It turns out that these three components are the same (Theorem 3.2). This eventually leads to the fact that $\mathcal{M}_F(X, \mathcal{A}, \mu)$ becomes {path connected} / {connected} if and only if it is a {topological ring}/{ topological vector space} (Theorem \ref{equivpath}). We would like to mention in this context that this last condition on $\mathcal{M}_F(X, \mathcal{A}, \mu)$ is equivalent to the $\aleph_0$-boundedness of the topological group $\mathcal{M}_F(X, \mathcal{A}, \mu)$. A topological group $(G,+)$ is  called $\aleph_0$-bounded if given any open neighborhood $U$ of $0$ in $G$, there exists a countable set $A\subseteq G$ such that $A+U=G$ \cite{KH}. Incidentally, we prove that if $\mathcal{M}_F(X, \mathcal{A}, \mu)$ is locally compact then it becomes {path connected} (Remark 3.12). However, for a large class of measures $\mu$, indeed for all {nonatomic }measures $\mu$ it is realized that $\mathcal{M}_F(X, \mathcal{A}, \mu)$ is nowhere locally compact.	Before stating the principal results in Section 4, we feel it imperative to recall the notions: {atomic} and {nonatomic measures}. A measurable set $A $ $\in \mathcal{A}$ is called an {atom} if $\mu(A) > 0$ and we cannot decompose $A$ in the manner:
	\[
	A = B \sqcup C, \quad B, C \in \mathcal{A},\mu(B) >0 \text{ and } \mu(C) >0.
	\]
	The measure $\mu$ is called an {atomic measure} on $\mathcal{A}$ if there exists at least one atom in the $\sigma$-algebra; otherwise, $\mu$ is called a {nonatomic measure}. An {atomic measure} $\mu$ on $\mathcal{A}$ is called {purely atomic} if each measurable set with strictly positive $\mu$-measure contains an atom. For further information on an {atomic }and {nonatomic} measures see \cite{10}.
	
	 In Section 4, we define the notion {hemifiniteness} for a measure $\mu$ in a manner somewhat analogous to the notion of {hemicompactness} in topological spaces and realize that if $\mu$ is a {nonatomic measure}, then $\mu$ is {hemifinite} if and only if $\mu$ is finite (Theorem 4.2). However, for any type of measure $\mu$, whether it is {atomic} or {nonatomic}, we prove that the {hemifiniteness} of $\mu$ is equivalent to the {first countability }of $\mathcal{M}_F(X, \mathcal{A}, \mu)$ (Theorem \ref{firstcounttable}). 
	 
	 In Section 5, we show that various cardinal functions associated with the space $\mathcal{M}_F(X, \mathcal{A}, \mu)$ and the $\sigma$-algebra $\mathcal{A}$ on $X$ become identical. Indeed, we prove that three cardinal functions, viz., the {tightness}, the {$\pi$-character} and the {character }of the topological space $\mathcal{M}_F(X, \mathcal{A}, \mu)$ become identical and are equal to the controlling number of the $\sigma$-algebra $\mathcal{A}$, as is defined appropriately (Theorem \ref{character}). Afterwards, we prove certain inequalities involving the {weight, density, cellularity} and {Lindel\"{o}f number} of $\mathcal{M}_F(X,\mathcal{A},\mu)$.
	
	\section{A few preliminary results}
	
	The $U_\mu$-topology and the $m_\mu$-topology on the ring $\mathcal{M}(X, \mathcal{A}, \mu)$ are introduced in \cite{1,2,3} as measure-theoretic counterparts of the $u$-topology and the $m$-topology on $C(X)$, respectively. We would like to recall that a typical basic open set in the space $\mathcal{M}(X, \mathcal{A}, \mu)$ equipped with $U_\mu$-topology is $B(f, X, \epsilon)$ with $\epsilon>0$ a real number, and a typical basic open set in $\mathcal{M}(X, \mathcal{A}, \mu)$ with $m_\mu$-topology is $B(f,X,\epsilon)$, where $\epsilon\in \mathcal{M}(X,\mathcal{A},\mu)$, $\epsilon(x)>0$ for all $x\in X$ and
	$$
	B(f, X,\epsilon) = \{ g \in \mathcal{M}(X,\mathcal{A},\mu) : \sup_{x \in X \setminus A_g} |f(x) - g(x)| < \epsilon\text{ for }A_g\in \mathcal{A}\text{ with }\mu(A_g)=0 \}
	$$
	It is easy to check that the $F_\mu$-topology is weaker than the $U_\mu$-topology on $\mathcal{M}(X, \mathcal{A}, \mu)$, which is further weaker than the $m_\mu$-topology on it. For convenience, let us denote by $\mathcal{M}_u(X, \mathcal{A}, \mu)$ ($\mathcal{M}_m(X, \mathcal{A}, \mu)$)  the set $\mathcal{M}(X, \mathcal{A}, \mu)$ with the $U_\mu$-topology (respectively the $m_\mu$-topology). If $\mu(X)<\infty$, then it is trivial that $\mathcal{M}_F(X, \mathcal{A}, \mu)=\mathcal{M}_u(X, \mathcal{A}, \mu)$. On the other hand if $\mu(X)=\infty$, then for any $A\in \mathcal{A}$ with $0<\mu(A)<\infty$ and $\delta>0$, it will never happen that $\underline{0}\in B(\underline{0}, A,\delta)\subseteq B(\underline{0}, X,1)$, because the function $f\in \mathcal{M}(X, \mathcal{A}, \mu)$ defined by $f(x)=\begin{cases}
		\frac{1}{2}\delta&\text{ if } x\in A\\
		2,& \text{ if }x\in X\setminus A
	\end{cases}$ belongs to $B(\underline{0}, A,\delta)$ but is outside the set $B(\underline{0},X,1)$. Consequently, $B(\underline{0},X,1)$ is not an open set in the space $\mathcal{M}_F(X, \mathcal{A}, \mu)$. Thus we already prove the following result.
\begin{theorem}
	$\mathcal{M}_F(X, \mathcal{A}, \mu)=\mathcal{M}_u(X, \mathcal{A}, \mu)$ if and only if $\mu(X)<\infty$.
\end{theorem}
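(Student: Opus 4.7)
The plan is to handle the two directions separately, with the bulk of the argument actually already contained in the paragraph preceding the statement; the proposal is essentially to repackage that discussion cleanly.

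For the ``if'' direction, assume $\mu(X)<\infty$. I would argue the two topologies have mutually refining bases. Every $U_\mu$-basic open set $B(f,X,\varepsilon)$ is legitimately a member of the $F_\mu$-base (since $A=X$ is now an admissible finite-measure set). Conversely, given any $F_\mu$-basic set $B(f,A,\varepsilon)$ with $\mu(A)<\infty$ and any $g\in B(f,A,\varepsilon)$, I would produce the $U_\mu$-basic set $B(g,X,\eta)$, where $\eta=\varepsilon-\sup_{x\in A\setminus A_g}|f(x)-g(x)|>0$, and check by the triangle inequality that $B(g,X,\eta)\subseteq B(f,A,\varepsilon)$, using that $A\setminus A_g\subseteq X\setminus A_g$. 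Both bases therefore refine each other, so the topologies coincide.

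For the ``only if'' direction I would argue the contrapositive: assuming $\mu(X)=\infty$, I aim to show that $B(\underline{0},X,1)$, a typical $U_\mu$-basic open neighborhood of $\underline{0}$, fails to be $F_\mu$-open at $\underline{0}$. The paper's standing hypothesis (every measurable set of infinite measure contains one of strictly positive finite measure) guarantees the existence of some $A\in\mathcal{A}$ with $0<\mu(A)<\infty$, and in fact allows me to perform this selection inside any candidate basic neighborhood $B(\underline{0},A,\delta)$ given to me. For such $A$ and $\delta>0$, I would exhibit explicitly the function
\[
f(x)=\begin{cases}\tfrac{1}{2}\delta & \text{if } x\in A,\\ 2 & \text{if } x\in X\setminus A,\end{cases}
\]
which is measurable, lies in $B(\underline{0},A,\delta)$ by construction, but satisfies $|f|\equiv 2$ off $A$, hence fails to belong to $B(\underline{0},X,1)$ modulo any null set (since $\mu(X\setminus A)=\infty>0$). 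Thus no $F_\mu$-basic neighborhood of $\underline{0}$ sits inside $B(\underline{0},X,1)$, so the $F_\mu$-topology is strictly coarser than the $U_\mu$-topology.

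The only conceptual obstacle is the ``only if'' direction, and specifically the need to ensure that an appropriate finite-measure set $A$ exists on which to build the witness $f$. This is precisely handled by the blanket hypothesis on the measure space spelled out in the introduction; once it is invoked, the construction is elementary and no further care is required. The ``if'' direction is purely routine base-refinement bookkeeping.
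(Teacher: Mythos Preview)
Your proposal is correct and follows exactly the paper's approach: the ``if'' direction is the trivial observation that $X$ itself becomes an admissible finite-measure index set, and the ``only if'' direction uses the very same witness function $f=\tfrac{\delta}{2}\chi_A + 2\chi_{X\setminus A}$ that the paper constructs in the paragraph preceding the theorem. One small remark: the standing hypothesis you invoke is not actually needed here---the set $A$ is handed to you as part of the arbitrary $F_\mu$-basic neighborhood, and the only fact required is $\mu(X\setminus A)>0$, which follows directly from $\mu(X)=\infty$ and $\mu(A)<\infty$.
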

 It follows from the  Corollary 3.24 in \cite{3} that $\mathcal{M}_u(X, \mathcal{A}, \mu)=\mathcal{M}_m(X, \mathcal{A}, \mu)$ if and only if each function in $\mathcal{M}(X, \mathcal{A}, \mu)$ is essentially bounded on $X$, i.e., $\mathcal{M}(X, \mathcal{A}, \mu)=L_X^\infty(X, \mathcal{A}, \mu)=L^\infty(X, \mathcal{A}, \mu)$. This yields to the following simple fact:
 \begin{theorem}
 	$\mathcal{M}_F(X, \mathcal{A}, \mu)=\mathcal{M}_m(X, \mathcal{A}, \mu)$ if and only if $\mu(X)<\infty$ and each function in $\mathcal{M}(X, \mathcal{A}, \mu)$ is  essentially bounded on $X$.\end{theorem}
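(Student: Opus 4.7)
The plan is to derive this theorem as an immediate consequence of the preceding Theorem 2.1 together with Corollary 3.24 of \cite{3}, by exploiting the chain of inclusions among the three topologies noted in the paragraph before Theorem 2.1, namely that the $F_\mu$-topology is weaker than the $U_\mu$-topology which is weaker than the $m_\mu$-topology. So as topologies on the common underlying set $\mathcal{M}(X,\mathcal{A},\mu)$ we have $F_\mu \le U_\mu \le m_\mu$.

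For the forward direction, assume $\mathcal{M}_F(X,\mathcal{A},\mu) = \mathcal{M}_m(X,\mathcal{A},\mu)$. Then by the sandwich $F_\mu \le U_\mu \le m_\mu$, all three topologies must coincide: in particular $\mathcal{M}_F(X,\mathcal{A},\mu) = \mathcal{M}_u(X,\mathcal{A},\mu)$ and $\mathcal{M}_u(X,\mathcal{A},\mu) = \mathcal{M}_m(X,\mathcal{A},\mu)$. Applying Theorem 2.1 to the first equality yields $\mu(X) < \infty$; applying Corollary 3.24 of \cite{3} (recalled just before the statement) to the second equality yields that every $f \in \mathcal{M}(X,\mathcal{A},\mu)$ is essentially bounded on $X$.

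The converse runs the same chain in the opposite direction. Assume $\mu(X) < \infty$ and that every function in $\mathcal{M}(X,\mathcal{A},\mu)$ is essentially bounded on $X$. Theorem 2.1 gives $\mathcal{M}_F(X,\mathcal{A},\mu) = \mathcal{M}_u(X,\mathcal{A},\mu)$, while Corollary 3.24 of \cite{3} gives $\mathcal{M}_u(X,\mathcal{A},\mu) = \mathcal{M}_m(X,\mathcal{A},\mu)$. Composing these equalities produces $\mathcal{M}_F(X,\mathcal{A},\mu) = \mathcal{M}_m(X,\mathcal{A},\mu)$, as required.

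There is essentially no obstacle to overcome here; the only thing to be careful about is the direction of the topology inequalities when one uses them to conclude that a sandwiched equality forces both outer equalities. Since the statement and its converse both reduce to two one-line invocations of already-proven results, no further measure-theoretic work (for instance, constructing explicit separating functions in $B(\underline{0},A,\delta) \setminus B(\underline{0},X,1)$ as in the proof of Theorem 2.1) is needed.
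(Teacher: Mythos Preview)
Your proof is correct and follows essentially the same route as the paper: the paper also derives this theorem by combining Theorem~2.1 with Corollary~3.24 of \cite{3} via the chain of inclusions $F_\mu \le U_\mu \le m_\mu$, noting just before the statement that this ``yields to the following simple fact.''
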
 Now it follows from Lemma 7.5 in \cite{3} that if $f\in \mathcal{M}(X, \mathcal{A}, \mu)$ and $G$ is a $\mu$-atom, then $f\in L^\infty_G(X,\mathcal{A},\mu)$. A more sharp result follows from this fact as is manifested by the next theorem:
 	 
\begin{theorem}\label{atom}
Let $G$ be a $\mu$-atom and $f\in \mathcal{M}(X, \mathcal{A}, \mu)$. Then $f$ is constant almost everywhere on $G$.
\end{theorem}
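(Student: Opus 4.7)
The plan is to leverage the essential boundedness of $f$ on $G$ (supplied by the cited Lemma 7.5 in \cite{3}) together with the defining dichotomy of an atom: every measurable subset $B\subseteq G$ satisfies either $\mu(B)=0$ or $\mu(G\setminus B)=0$. The target is a single real number $\beta$ such that $f=\beta$ $\mu$-a.e.\ on $G$; the natural candidate is the essential supremum of $f$ on $G$.

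First I would apply the atom dichotomy to the sublevel sets. For each $t\in\mathbb{R}$, the set $G_t=\{x\in G:f(x)>t\}$ is measurable, and $\mu(G_t)\in\{0,\mu(G)\}$. Since $f$ is essentially bounded on $G$, there exists $M>0$ with $\mu(G_M)=0$, and for $t$ sufficiently negative $\mu(G_t)=\mu(G)$. Therefore the quantity
\[
\beta=\inf\{t\in\mathbb{R}:\mu(G_t)=0\}
\]
is a well-defined finite real number.

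Next I would use monotonicity and continuity of measure. Because $t\mapsto\mu(G_t)$ is nonincreasing, for every $t>\beta$ we have $\mu(G_t)=0$; applying upward continuity of $\mu$ to the increasing union $\{x\in G:f(x)>\beta\}=\bigcup_{n}G_{\beta+1/n}$ gives $\mu(\{x\in G:f(x)>\beta\})=0$. Symmetrically, for every $t<\beta$ we have $\mu(G_t)>0$, hence $\mu(G_t)=\mu(G)$ by the atom property, which means $\mu(\{x\in G:f(x)\leq t\})=0$; another upward-continuity argument on $\{x\in G:f(x)<\beta\}=\bigcup_n\{x\in G:f(x)\leq\beta-1/n\}$ yields $\mu(\{x\in G:f(x)<\beta\})=0$. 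Adding these two null sets shows that $f=\beta$ $\mu$-a.e.\ on $G$, which is the desired conclusion.

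I do not expect a serious obstacle here; the proof is essentially bookkeeping once one isolates $\beta$ as the essential supremum. The only delicate point is making sure that $\beta$ is a \emph{finite} real number, which is precisely where the cited essential boundedness of $f$ on an atom is used; without it, $\beta$ could be $\pm\infty$ and the argument would need an extra step to rule that out (it could be done directly: exhausting $G$ by $\{|f|\leq n\}$ and invoking the atom property again, but appealing to Lemma 7.5 in \cite{3} is cleaner).
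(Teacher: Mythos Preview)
Your proof is correct, but it follows a genuinely different route from the paper's. The paper argues by approximation: it first reduces to $f\ge 0$, observes that the result is easy for simple functions (each piece $G\cap A_i$ must have either full or null measure in $G$, so only one value survives), and then approximates an arbitrary $f$ by an increasing sequence of simple functions $f_n$. Each $f_n$ is constant $c_n$ on $G$ off a null set, and taking a single union of these null sets gives $f=\lim c_n$ a.e.\ on $G$.

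Your argument is more direct: you work with the sublevel sets $G_t$ and exploit the atom dichotomy $\mu(G_t)\in\{0,\mu(G)\}$ to pin down the unique threshold $\beta$ at which the value of $\mu(G_t)$ flips, then use countable subadditivity to conclude $f=\beta$ a.e. The payoff of your approach is that it identifies $\beta$ intrinsically as the essential supremum and avoids the detour through simple-function approximation; moreover, as you note, the appeal to Lemma~7.5 of \cite{3} is essentially cosmetic, since the same dichotomy applied to $\{|f|\le n\}$ yields essential boundedness in one line. The paper's approach, by contrast, recycles a standard structural theorem and makes the simple-function case do the real work, which is perhaps conceptually cleaner if one wants to emphasize that atoms see only finitely many values of any simple function.
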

\begin{proof}
	Without loss of generality, we can assume that $f\geq 0$ on $X$. If $f$ is a simple function, then this result is obtained almost immediately from the definition of a $\mu$-atom, on applying Lemma 7.5 in \cite{3} as reproduced above. Therefore, assume that $f$ is not a simple function. Then from a standard result in measure theory \cite[Theorem 2.89]{8} there exists a monotonic increasing sequence of simple functions $$f_1(x)\leq f_2(x)\leq f_3(x)\leq \cdots\text{ for all } x\in X \text{ such that} \lim\limits_{n\rightarrow\infty}f_n(x)=f(x)$$ holds over $X$. Since each simple function $f_n$ is constant almost everywhere on $G$, as observed above, there exists a measurable subset $G_0$ of $G$ with $\mu(G_0)=0$ and a sequence $\{c_n\}_n$ of non negative real numbers such that $f_n(x)=c_n$ for all $n\in \mathbb{N}$ and for all $x\in G\setminus G_0$. It follows that$f(x)=\lim\limits_{n\rightarrow\infty}c_n$ for all $x\in G\setminus G_0$.
\end{proof}
Let $\mathcal{A}_>=\{A\in\mathcal{A}:0<\mu(A)<\infty\}$. Then the theorem that follows will be useful to us in the next section. \begin{theorem}\label{clopen}
	 \begin{enumerate}
	 	\item For any $G\in \mathcal{A}_>$, $L_G^\infty$ is a clopen set in $ \mathcal{M}_F(X, \mathcal{A}, \mu)$.
	 	\item For any $G\in \mathcal{A}$ with $\mu(G)>0$, $L_G^\infty(X,\mathcal{A},\mu)=\mathcal{M}(X,\mathcal{A},\mu)$ if and only if $G$ can be written as the union of a finite number of $\mu$-atoms.
	 \end{enumerate}
\end{theorem}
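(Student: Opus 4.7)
For part (1), the plan is to show that $L_G^\infty=L_G^\infty(X,\mathcal{A},\mu)$ is simultaneously open and closed in $\mathcal{M}_F(X,\mathcal{A},\mu)$, using the basic $F_\mu$-neighbourhood $B(f,G,1)$, which is a legitimate basic open set precisely because $G\in\mathcal{A}_>$ forces $0<\mu(G)<\infty$. If $f\in L_G^\infty$ is essentially bounded by some $M$ outside a null set $A$, then for any $g\in B(f,G,1)$ with corresponding null set $A_g$ the triangle inequality gives $|g|\le M+1$ on $G\setminus(A\cup A_g)$, so $g\in L_G^\infty$; this yields $B(f,G,1)\subseteq L_G^\infty$, proving openness. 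The dual argument works for closedness: if $f\notin L_G^\infty$, any $g\in B(f,G,1)$ that were essentially bounded on $G$ would drag $f$ into $L_G^\infty$ through the same triangle inequality, a contradiction, so $B(f,G,1)$ lies in the complement of $L_G^\infty$.

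The ``if'' direction of part (2) is immediate from Theorem~\ref{atom}: if $G=A_1\cup\cdots\cup A_n$ with each $A_i$ a $\mu$-atom, every $f\in\mathcal{M}(X,\mathcal{A},\mu)$ is essentially constant (hence essentially bounded) on each $A_i$, so essentially bounded on $G$, giving $L_G^\infty=\mathcal{M}(X,\mathcal{A},\mu)$.

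For the ``only if'' direction I will argue contrapositively. Assuming $G$ cannot be written as the union of finitely many $\mu$-atoms, the plan is to exhibit a pairwise disjoint sequence $\{E_n\}_{n\geq 1}\subseteq\mathcal{A}$ with $E_n\subseteq G$ and $0<\mu(E_n)<\infty$, and then set $f=\sum_{n=1}^{\infty}n\,\chi_{E_n}$. This $f$ is a finite-valued measurable function, and for any $M>0$ the set $\{x\in G:|f(x)|>M\}$ contains $E_n$ for every $n>M$, a set of positive measure, so $f$ is not essentially bounded on $G$, placing $f$ outside $L_G^\infty$. To construct $\{E_n\}$ I branch into two cases. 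If $G$ carries infinitely many pairwise essentially disjoint $\mu$-atoms, I take them as the $E_n$; the standing hypothesis that every measurable set of infinite measure contains a set of strictly positive finite measure forces every atom to have finite measure, because inside an atom the only possible measures are $0$ and the atom's own measure. Otherwise only finitely many essentially disjoint atoms $A_1,\dots,A_k$ lie in $G$, and the residual $N=G\setminus(A_1\cup\cdots\cup A_k)$ must carry positive measure (else $G$ would itself be a finite union of atoms, after absorbing the null remainder into $A_1$) and contains no $\mu$-atom; after shrinking $N$ to a finite-positive-measure subset via the standing hypothesis if $\mu(N)=\infty$, I repeatedly split $N$ into two measurable pieces of positive measure, which is possible at each stage because no positive-measure subset of $N$ is an atom, yielding the required disjoint sequence.

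The main obstacle is the reverse direction of (2), especially the mixed case where $G$ hosts some atoms but is not exhausted by finitely many of them; one has to decompose $G$ into atomic and non-atomic portions and invoke the standing hypothesis twice, once to bound the measure of every atom and once to pass from infinite to finite measure in the residual. Part (1) and the forward direction of (2) are comparatively routine consequences of triangle-inequality bookkeeping and Theorem~\ref{atom}.
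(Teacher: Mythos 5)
Your proof is correct, and both part (1) and the forward implication of part (2) follow the paper's argument essentially verbatim (the paper merely asserts $B(f,G,1)\subseteq L_G^\infty$ for $f\in L_G^\infty$ and $B(g,G,1)\cap L_G^\infty=\emptyset$ for $g\notin L_G^\infty$, without the triangle-inequality bookkeeping you supply). For the reverse implication of (2) you and the paper share the same endgame --- produce a pairwise disjoint sequence of positive-measure subsets of $G$ and feed it into the unbounded step function $\sum_n n\,\chi_{E_n}$ --- but you organize the construction by a genuinely different dichotomy. The paper splits according to whether some positive-measure subset of $G$ contains no atom (Case I) or every positive-measure subset of $G$ contains an atom (Case II), and in each case obtains the sequence by repeated bisection; Case II requires arguing at every stage of the induction that at least one piece of the current partition fails to be an atom, which is the delicate point. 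You instead fix a maximal pairwise essentially disjoint family of atoms inside $G$: if it is infinite you take those atoms directly (using the standing hypothesis to see each atom has finite measure, a nice though not strictly necessary observation, since only positivity of $\mu(E_n)$ is used), and if it is finite the atomless residual has positive measure and can be bisected indefinitely. Your route is arguably cleaner, as it makes the atomic/nonatomic decomposition of $G$ explicit and avoids the paper's Case II induction. Two cosmetic points to tidy: justify the existence of the maximal essentially disjoint family (Zorn's lemma or a simple exhaustion argument), and pass from essentially disjoint atoms to genuinely disjoint representatives $E_n=A_n\setminus\bigcup_{m<n}A_m$ (still of positive measure) before forming $\sum_n n\,\chi_{E_n}$, so that the function is pointwise well defined.
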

\begin{proof}
	1. For any $f\in L^\infty_G(X,\mathcal{A},\mu)$, $B(f,G,1)\subseteq L^\infty_G(X,\mathcal{A},\mu)$, this proves that $L^\infty_G(X,\mathcal{A},\mu)$ is open in $\mathcal{M}_F(X,\mathcal{A},\mu)$. On the other hand if $g\notin L^\infty_G(X,\mathcal{A},\mu)$, then $B(g,G,1)\cap L^\infty_G(X,\mathcal{A},\mu)=\emptyset$. Hence, $L^\infty_G(X,\mathcal{A},\mu)$ is a closed subset of $\mathcal{M}_F(X,\mathcal{A},\mu)$.
	
	2. If $G$ is the union of a finite number of $\mu$-atoms, then it follows in a straightforward manner from Theorem \ref{atom} that $\mathcal{M}(X,\mathcal{A},\mu)=L^\infty_G(X,\mathcal{A},\mu)$. To prove the other part of this result, assume that $L^\infty_G(X,\mathcal{A},\mu)=\mathcal{M}(X,\mathcal{A},\mu)$. If possible, let $G$ cannot be written as the union of finitely many $\mu$-{atoms} $\cdots$(*).
	
	 There are two cases:

	Case-I: There is a $B\in \mathcal{A}$ with $\mu(B)>0$ such that $B\subseteq G  $ and $B$ does not contain a $\mu$-{atom}, in particular $G$ itself is not a $\mu$-{atom}. So, $G$ has a decomposition, $G=B_1\cup B_2$, where $\mu(B_1)>0$ and $\mu(B_2)>0$. Since none of $B_1$ and $B_2$ is an {atom}, each of $B_1$ and $B_2$ has a decomposition into two sets belonging to $\mathcal{A}$ with positive measure. An obvious induction therefore yields a pairwise disjoint sequence $\{B_n\}_n$ of members of $\mathcal{A}$ such that each $B_n\subseteq G$ and $\mu(B_n)>0$ for all $n\in \mathbb{N}$.

	Case-II: Each member of $\mathcal{A}$ with $\mu$-measure $>0$, contained in $G$ does contain an {atom}. Now, since by the assumption (*), $G$ is not a $\mu$-{{atom}}, $G$ has a decomposition, $G=B_1\cup B_2$, where $B_1, B_2\in \mathcal{A}$ with $\mu(B_1)>0$, $\mu(B_2)>0$. From the same assumption (*), it follows that at least one of $B_1$ and $B_2$ is a non $\mu$-{atom}, say $B_1$ is not a $\mu$-{atom}. Then $B_1$ has a decomposition $B_1=B_{11}\cup B_{12},$ where $B_{11},B_{12}\in \mathcal{A}$ with $\mu(B_{11})>0$, $\mu(B_{12})>0$ and from (*) we get at least one of $B_{11}$, $B_{12}$ and $B_2$ is a non $\mu$-{atom}. An obvious induction therefore results in a pairwise disjoint sequence of members $\{B_n\}_n$ of $\mathcal{A}$, each $B_n$ begin contained in $G$ and $\mu(B_n)>0$ for all $n\in \mathbb{N}$. 
	Thus in any case, we have a pairwise disjoint sequence $\{B_n\}_n$ of members of $\mathcal{A}$ with $\mu(B_n)>0$ such that $B_n\subseteq G$ for each $n\in \mathbb{N}$. Construct the function $f\in \mathcal{M}(X,\mathcal{A},\mu)$ as follows: $$f(x)=\begin{cases}
		n,\text{ when }x\in B_n,\text{ for all }n\in \mathbb{N}\\
		0,\text{ elsewhere }
	\end{cases}$$ Surely, $f\notin L^\infty_G(X,\mathcal{A},\mu)$. We arrive at a contradiction.
\end{proof}
\begin{corollary}
	The $u_\mu$-topology= $m_\mu$-topology on $\mathcal{M}(X,\mathcal{A},\mu)$ if and only if $X$ can be written as the union of finitely many {atoms}. This follows from Corollary 3.2.4 in \cite{3} and Theorem 2.4 (2). Thus for an infinite {nonatomic measure} $\mu$ on the $\sigma$-algebra $\mathcal{A}$ on $X$, the $F_\mu$-topology $\subsetneq$ the $u_\mu$-topology $\subsetneq$ the $m_\mu$-topology.
\end{corollary}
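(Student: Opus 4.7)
The plan is to combine two results already available in the excerpt. The first equivalence concerns the $u_\mu$- and $m_\mu$-topologies coinciding; the second assertion about the strict chain under an infinite nonatomic measure will then follow immediately.

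First I would invoke the consequence of Corollary 3.24 of \cite{3} that is recorded just above Theorem \ref{clopen}: the $u_\mu$- and $m_\mu$-topologies agree on $\mathcal{M}(X,\mathcal{A},\mu)$ exactly when every measurable function is essentially bounded on $X$, i.e., when $\mathcal{M}(X,\mathcal{A},\mu) = L_X^\infty(X,\mathcal{A},\mu)$. Next I would apply Theorem \ref{clopen}(2) with $G = X$ (permitted because of the standing assumption $\mu(X) > 0$), which translates the equality $L_X^\infty(X,\mathcal{A},\mu) = \mathcal{M}(X,\mathcal{A},\mu)$ into the combinatorial condition that $X$ be expressible as a union of finitely many $\mu$-atoms. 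Chaining the two equivalences yields the first claim.

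For the strict chain under an infinite nonatomic measure I would argue as follows. Nonatomicity forbids any $\mu$-atom in $\mathcal{A}$, so in particular $X$ cannot be written as a finite union of atoms, and the first part of the corollary forces the $u_\mu$-topology to differ from the $m_\mu$-topology; since the $u_\mu$-topology is always weaker than the $m_\mu$-topology, the strict inclusion $u_\mu \subsetneq m_\mu$ follows. On the other hand $\mu(X) = \infty$, and Theorem 2.1 shows that the $F_\mu$-topology equals the $u_\mu$-topology if and only if $\mu(X) < \infty$, yielding the strict inclusion $F_\mu \subsetneq u_\mu$.

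The whole argument is essentially bookkeeping of earlier equivalences, so there is no serious obstacle; the only point that warrants a moment of care is verifying that the standing hypothesis $\mu(X) > 0$ legitimately allows the invocation of Theorem \ref{clopen}(2) with the choice $G = X$.
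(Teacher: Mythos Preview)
Your proposal is correct and follows exactly the route indicated in the paper: the corollary's justification is embedded in its statement (``This follows from Corollary 3.2.4 in \cite{3} and Theorem 2.4(2)''), and you have spelled out precisely that chain of equivalences, together with the appeal to Theorem~2.1 for the strictness of $F_\mu \subsetneq u_\mu$ when $\mu(X)=\infty$. Your remark that Theorem~\ref{clopen}(2) applies with $G=X$ because of the standing hypothesis $\mu(X)>0$ is the only point requiring any care, and you have handled it.
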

\section{ Connectedness and local compactness of $\mathcal{M}_F(X,\mathcal{A},\mu)$}
We start with the following lemma which helps us to compute the component of $\underline{0}$ in the $F_\mu$-topology.\begin{lemma}\label{pathconnected}
	$\bigcap\limits_{G\in \mathcal{A}_>}L^\infty_G(X,\mathcal{A},\mu)$ is {path connected} in the $F_\mu$-topology, appropriately relativized.
\end{lemma}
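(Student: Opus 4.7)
The plan is to show, for every $f \in S := \bigcap_{G\in\mathcal{A}_>} L^\infty_G(X,\mathcal{A},\mu)$, that the straight-line segment $t \mapsto tf$ is a continuous path in $S$ joining $\underline{0}$ to $f$. Since the $F_\mu$-topology makes $\mathcal{M}(X,\mathcal{A},\mu)$ a topological group, the existence of a path from $\underline{0}$ to every point of $S$ is enough to conclude path connectedness of $S$ in the relativized topology.

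First I would observe that $S$ is stable under scalar multiplication: if $f \in L^\infty_G(X,\mathcal{A},\mu)$ for every $G \in \mathcal{A}_>$, then so is $tf$ for every real $t$, because multiplying an essential bound by $|t|$ still gives an essential bound on the same co-null subset of $G$. Hence the formula $\gamma_f(t) = tf$ defines a map $\gamma_f \colon [0,1] \to S$ that takes $0$ to $\underline{0}$ and $1$ to $f$.

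The main step is to verify continuity of $\gamma_f$ at an arbitrary point $t_0 \in [0,1]$. Fix a basic neighborhood $B(t_0 f, A, \epsilon)$ of $\gamma_f(t_0)$ in the $F_\mu$-topology; as the paper already noted, if $\mu(A)=0$ the basic open set coincides with the whole space, so I may assume $A \in \mathcal{A}_>$. Because $f$ lies in $S \subseteq L^\infty_A(X,\mathcal{A},\mu)$, there exist $M>0$ and a $\mu$-null set $A_0 \subseteq A$ such that $|f(x)| \leq M$ for every $x \in A \setminus A_0$. Setting $\delta = \epsilon/(M+1)$, for every $t \in [0,1]$ with $|t-t_0| < \delta$ one has, using the same null set $A_0$,
$$\sup_{x \in A \setminus A_0} |tf(x) - t_0 f(x)| \;=\; |t-t_0|\,\sup_{x \in A \setminus A_0}|f(x)| \;\leq\; |t-t_0|\,M \;<\; \epsilon,$$
which gives $\gamma_f(t) \in B(t_0 f, A, \epsilon)$. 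This establishes continuity of $\gamma_f$.

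The only real obstacle I anticipate is conceptual rather than computational: the essential bound $M$ must be available for \emph{every} $A \in \mathcal{A}_>$ that can appear in a basic neighborhood, and it is precisely the defining property of $S$, as the intersection of the sets $L^\infty_G$ over all $G \in \mathcal{A}_>$, that supplies such an $M$ for each such $A$. Without this simultaneous essential boundedness, the uniform estimate on $A \setminus A_0$ collapses; with it, the straight-line homotopy inside $S$ works verbatim and gives the desired path connectedness.
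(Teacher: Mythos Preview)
Your proof is correct and follows essentially the same approach as the paper: both define the straight-line path $t\mapsto tf$, observe that it stays inside the intersection, and verify continuity at an arbitrary $t_0$ by using the essential bound of $f$ on the set $A\in\mathcal{A}_>$ appearing in a basic neighborhood $B(t_0 f,A,\epsilon)$. The only cosmetic differences are your choice of $\delta=\epsilon/(M+1)$ versus the paper's $\epsilon/(4M)$ and your explicit mention of the trivial case $\mu(A)=0$.
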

\begin{proof}
	Let $f\in \bigcap\limits_{G\in \mathcal{A}_>}L^\infty_G(X,\mathcal{A},\mu)$. Since $\mathcal{M}_F(X,\mathcal{A},\mu)$ is a homogeneous space, it is enough to show that $f$ is pathwise connected to $\underline{0}$. For that purpose, define the map $p:[0,1]\rightarrow \mathcal{M}_F(X,\mathcal{A},\mu)$ by the rule $p(t)=t\cdot f$. It is clear that $p([0,1])\subseteq \bigcap\limits_{G\in \mathcal{A}_>}L^\infty_G(X,\mathcal{A},\mu)$ and $p(0)=\underline{0}$, $p(1)=f$. Towards proving the continuity of $p$ at a point $t\in [0,1]$, let $B(p(t),F,\epsilon)$ be any basic open set in $\mathcal{M}_F(X,\mathcal{A},\mu)$ containing $p(t)$, here $F\in \mathcal{A}_>$ and $\epsilon>0$. Now since $f\in L_F^\infty(X,\mathcal{A},\mu)$, there is an $M>0$ in $\mathbb{R}$ such that $|f(x)|<M$ for all $x$, almost everywhere $(\mu)$ on $F$. Set $\delta=\frac{1}{4M}\epsilon$, then for any $t'\in [0,1]\cap(t-\delta, t+\delta)$, $|p(t')(x)-p(t)(x)|<\epsilon$ for all x, almost everywhere $(\mu)$ on $F$. This settles the continuity of $p$ at the point $t$.
\end{proof}
\begin{theorem}\label{component}
	The component $C$, the quasi component $Q$ and the path component $P$ of the function $\underline{0}$ is the space $\mathcal{M}_F(X,\mathcal{A},\mu)$ coincide and are equal to $\bigcap\limits_{G\in \mathcal{A}_>}L^\infty_G(X,\mathcal{A},\mu)$. 
\end{theorem}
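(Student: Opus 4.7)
My plan is to exploit the standard chain of inclusions that holds in every topological space, namely $P \subseteq C \subseteq Q$, where $P$, $C$, $Q$ denote respectively the path component, the component, and the quasi component of a given point. Thus it suffices to establish the two extreme inclusions
\[
\bigcap_{G \in \mathcal{A}_>} L^\infty_G(X,\mathcal{A},\mu) \;\subseteq\; P \quad \text{and} \quad Q \;\subseteq\; \bigcap_{G \in \mathcal{A}_>} L^\infty_G(X,\mathcal{A},\mu),
\]
which together with the standard chain will force all three components to coincide with the prescribed intersection.

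For the first inclusion, I would simply invoke Lemma \ref{pathconnected}: it has already been shown that the set $\bigcap_{G \in \mathcal{A}_>} L^\infty_G(X,\mathcal{A},\mu)$ is path connected in the relative $F_\mu$-topology, and it plainly contains $\underline{0}$ (since the zero function is essentially bounded on every measurable set). Hence this whole intersection sits inside the path component $P$ of $\underline{0}$.

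For the second inclusion, I would recall that the quasi component of $\underline{0}$ is, by definition, the intersection of all clopen subsets of $\mathcal{M}_F(X,\mathcal{A},\mu)$ that contain $\underline{0}$. By part (1) of Theorem \ref{clopen}, for every $G \in \mathcal{A}_>$ the set $L^\infty_G(X,\mathcal{A},\mu)$ is clopen in the $F_\mu$-topology, and of course $\underline{0} \in L^\infty_G(X,\mathcal{A},\mu)$. Consequently $Q$ is contained in $\bigcap_{G \in \mathcal{A}_>} L^\infty_G(X,\mathcal{A},\mu)$, which settles the remaining inclusion.

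Stringing these inclusions together, we obtain
\[
\bigcap_{G \in \mathcal{A}_>} L^\infty_G(X,\mathcal{A},\mu) \subseteq P \subseteq C \subseteq Q \subseteq \bigcap_{G \in \mathcal{A}_>} L^\infty_G(X,\mathcal{A},\mu),
\]
and so $P = C = Q = \bigcap_{G \in \mathcal{A}_>} L^\infty_G(X,\mathcal{A},\mu)$. The argument is essentially a bookkeeping matter; the substantive content has already been packaged into Lemma \ref{pathconnected} and Theorem \ref{clopen}(1), so there is no real obstacle to overcome here beyond correctly assembling the two facts.
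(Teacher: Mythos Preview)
Your proposal is correct and follows essentially the same approach as the paper's own proof: invoke the standard chain $P\subseteq C\subseteq Q$, use Lemma~\ref{pathconnected} to get $\bigcap_{G\in\mathcal{A}_>}L^\infty_G\subseteq P$, and use Theorem~\ref{clopen}(1) to get $Q\subseteq\bigcap_{G\in\mathcal{A}_>}L^\infty_G$. The only difference is that you spell out explicitly why $\underline{0}$ lies in each $L^\infty_G$ and recall the definition of the quasi component, whereas the paper leaves these as understood.
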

\begin{proof}
	 It is well known that $P\subseteq C\subseteq Q$. It follows from Lemma \ref{pathconnected} that $\bigcap\limits_{G\in \mathcal{A}_>}L^\infty_G(X,\mathcal{A},\mu)$$\subseteq P$. On the other hand, Theorem \ref{clopen}(1) implies that $Q\subseteq \bigcap\limits_{G\in \mathcal{A}_>}L^\infty_G(X,\mathcal{A},\mu)$. Thus, $P=C=Q=\bigcap\limits_{G\in \mathcal{A}_>}L^\infty_G(X,\mathcal{A},\mu)$.
\end{proof}
\begin{corollary}
	The space $\mathcal{M}_F(X,\mathcal{A},\mu)$ is never totally disconnected because \newline $L^\infty(X,\mathcal{A},\mu)\equiv L_X^\infty(X,\mathcal{A},\mu)\subseteq  \bigcap\limits_{G\in \mathcal{A}_>}L^\infty_G(X,\mathcal{A},\mu)$ and each real valued constant function $\underline{r}$, $r\in \mathbb{R}$ belongs to $L^\infty(X,\mathcal{A},\mu)$.
\end{corollary}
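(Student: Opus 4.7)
The plan is to show that the connected component of the zero function $\underline{0}$ in $\mathcal{M}_F(X,\mathcal{A},\mu)$ contains more than one point; since total disconnectedness requires every component to be a singleton, this suffices. By Theorem \ref{component}, the component of $\underline{0}$ equals $\bigcap_{G\in\mathcal{A}_>}L^\infty_G(X,\mathcal{A},\mu)$, so I only need to exhibit at least two distinct functions lying in this intersection.

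The first step is the straightforward inclusion $L^\infty(X,\mathcal{A},\mu)\equiv L^\infty_X(X,\mathcal{A},\mu)\subseteq L^\infty_G(X,\mathcal{A},\mu)$ for every $G\in\mathcal{A}$: if $f$ is bounded off a $\mu$-null subset of $X$, then $f$ is certainly bounded off a $\mu$-null subset of $G$. Intersecting over all $G\in\mathcal{A}_>$ yields $L^\infty(X,\mathcal{A},\mu)\subseteq \bigcap_{G\in\mathcal{A}_>}L^\infty_G(X,\mathcal{A},\mu)$. The second step is to note that for every real $r$ the constant function $\underline{r}$ is literally bounded on $X$, so $\underline{r}\in L^\infty(X,\mathcal{A},\mu)$. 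Since the standing hypothesis $\mu(X)>0$ ensures that distinct constants are distinct elements of $\mathcal{M}(X,\mathcal{A},\mu)$, the component of $\underline{0}$ contains the whole one-parameter family $\{\underline{r}:r\in\mathbb{R}\}$, and in particular it is not the singleton $\{\underline{0}\}$. This gives the desired failure of total disconnectedness.

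There is essentially no combinatorial or analytic obstacle here: both inclusions are immediate from the definitions, and the component identification has already been established in Theorem \ref{component}. The only conceptual point worth flagging is that $\mathcal{M}(X,\mathcal{A},\mu)$ is not being quotiented by almost-everywhere equivalence, so distinct real constants genuinely give distinct points of the space; this is consistent with the paper's convention that $\mathcal{M}(X,\mathcal{A},\mu)$ denotes the ring of all real-valued measurable functions. Hence the corollary drops out as an immediate consequence of Theorem \ref{component} together with the trivial fact that constants are essentially bounded on $X$.
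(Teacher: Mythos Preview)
Your proof is correct and follows exactly the same line as the paper: the corollary in the paper carries its own justification in the statement, namely that $L^\infty(X,\mathcal{A},\mu)\subseteq\bigcap_{G\in\mathcal{A}_>}L^\infty_G(X,\mathcal{A},\mu)$ and that all constants lie in $L^\infty(X,\mathcal{A},\mu)$, so by Theorem~\ref{component} the component of $\underline{0}$ contains every constant function and is therefore not a singleton. Your write-up simply unpacks this reasoning with a bit more care (including the remark that distinct constants are genuinely distinct points of the space), but the argument is identical.
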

The next theorem gives a measure theoretic characterization of path connectedness of $\mathcal{M}_F(X,\mathcal{A},\mu)$.
\begin{theorem}\label{equivpath}
	The following statements are equivalent-
	\begin{enumerate}
		\item $\mathcal{M}_F(X,\mathcal{A},\mu)$ is path connected.
		\item $\mathcal{M}_F(X,\mathcal{A},\mu)$ is connected.
		\item $\mathcal{M}_F(X,\mathcal{A},\mu)=\bigcap\limits_{G\in \mathcal{A}_>}L^\infty_G(X,\mathcal{A},\mu)$.
		\item Any measurable set with finite non zero $\mu$-measure can be written as the union of a finite number if $\mu$-{atom}s.
		\item $\mathcal{M}_F(X,\mathcal{A},\mu)$ is a topological ring.
		\item $\mathcal{M}_F(X,\mathcal{A},\mu)$ is a topological vector space.
		\item $\mathcal{S}\equiv$ the collection of all simple functions, is dense in $\mathcal{M}_F(X,\mathcal{A},\mu)$.
		\item $\mathcal{I}\equiv$ the collection of all integrable functions, is dense in $\mathcal{M}_F(X,\mathcal{A},\mu)$.
		\item $\mathcal{M}_F(X,\mathcal{A},\mu)$ is {$\aleph_0$-bounded}.
	\end{enumerate}
\end{theorem}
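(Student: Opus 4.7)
The plan is to structure the proof as three blocks of equivalences anchored at statement~(3). The first block $(1)\Leftrightarrow(2)\Leftrightarrow(3)\Leftrightarrow(4)$ is essentially free from the earlier results. Since $\mathcal{M}_F(X,\mathcal{A},\mu)$ is a topological group it is a homogeneous space, so both connectedness and path-connectedness of the whole space are equivalent to the corresponding component of $\underline{0}$ being everything. Theorem~\ref{component} identifies all three components of $\underline{0}$ with $\bigcap_{G\in\mathcal{A}_>}L^\infty_G(X,\mathcal{A},\mu)$, which yields $(1)\Leftrightarrow(2)\Leftrightarrow(3)$. Applying Theorem~\ref{clopen}(2) to each $G\in\mathcal{A}_>$ then gives $(3)\Leftrightarrow(4)$.

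Next I would close the cycle $(3)\Rightarrow(5)\Rightarrow(6)\Rightarrow(1)$ for the algebraic conditions. For $(3)\Rightarrow(5)$, continuity of multiplication at $(f,g)$ reduces, on a basic neighborhood $B(fg,A,\epsilon)$, to the standard bilinear sup-norm estimate once essential bounds for $f$ and $g$ on the finite-measure set $A$ are available, and (3) supplies these bounds. For $(5)\Rightarrow(6)$, the embedding $r\mapsto\underline{r}$ of $\mathbb{R}$ into $\mathcal{M}_F(X,\mathcal{A},\mu)$ is continuous because constant functions lie in every $L^\infty_G$, so composing with ring multiplication yields continuity of scalar multiplication. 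Then $(6)\Rightarrow(1)$ follows because $t\mapsto tf$ is a continuous path from $\underline{0}$ to $f$ in any topological vector space. For the density equivalences $(3)\Leftrightarrow(7)\Leftrightarrow(8)$: direction $(3)\Rightarrow(7)$ uses the dyadic approximation theorem for essentially bounded measurable functions restricted to the finite-measure set $A$ appearing in a basic neighborhood of $f$; conversely, $(7)\Rightarrow(3)$ uses that $\mathcal{S}\subseteq L^\infty_G$ and that each $L^\infty_G$ is closed by Theorem~\ref{clopen}(1), so density of $\mathcal{S}$ forces $\mathcal{M}(X,\mathcal{A},\mu)=L^\infty_G$ for every $G\in\mathcal{A}_>$. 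The equivalence $(7)\Leftrightarrow(8)$ is routine, because the approximating simple functions produced in $(3)\Rightarrow(7)$ can be arranged to vanish outside $A$ and are therefore integrable, while $\mathcal{S}\supseteq\mathcal{S}\cap\mathcal{I}$ and each simple function in the approximation is itself integrable.

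Finally, for $(3)\Leftrightarrow(9)$: to prove $(4)\Rightarrow(9)$, fix a basic neighborhood $U=B(\underline{0},A,\epsilon)$ and write $A=G_1\sqcup\cdots\sqcup G_n$ with each $G_i$ a $\mu$-atom. By Theorem~\ref{atom}, every $f\in\mathcal{M}(X,\mathcal{A},\mu)$ is almost everywhere constant $c_i$ on $G_i$, and the countable set $S=\{\sum_{i=1}^{n}q_{i}\chi_{G_{i}}:q_{i}\in\mathbb{Q}\}$ satisfies $S+U=\mathcal{M}(X,\mathcal{A},\mu)$. For $(9)\Rightarrow(4)$, I argue contrapositively: if some $A\in\mathcal{A}_>$ is not a finite union of $\mu$-atoms, then the dichotomy used in the proof of Theorem~\ref{clopen}(2) furnishes a pairwise disjoint sequence $\{B_n\}\subseteq A$ with $\mu(B_n)>0$. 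The uncountable family $\{f_\alpha=3\sum_{n}\alpha_{n}\chi_{B_n}:\alpha\in\{0,1\}^{\mathbb{N}}\}$ has any two distinct members differing by exactly $3$ on a set of positive measure inside $A$, so each translate $s+B(\underline{0},A,1)$ can contain at most one such function, contradicting $\aleph_0$-boundedness. I expect the main delicacy to lie in this last step, where the positivity of $\mu(B_n)$ must be used in an essential way to survive the removal of the null sets hidden in the definition of $B(\underline{0},A,1)$.
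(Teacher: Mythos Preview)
Your overall architecture is sound, and several of your shortcuts are cleaner than the paper's own arguments: deducing $(7)\Rightarrow(3)$ from $\mathcal{S}\subseteq L^\infty_G$ together with the closedness of $L^\infty_G$ (Theorem~\ref{clopen}(1)) is more elegant than the paper's cycle $(4)\Rightarrow(7)\Rightarrow(8)\Rightarrow(4)$; your $(5)\Rightarrow(6)$ via the continuous embedding $r\mapsto\underline r$ composed with ring multiplication, and your $(9)\Rightarrow(4)$ via the uncountable separated family $\{3\sum_n\alpha_n\chi_{B_n}:\alpha\in\{0,1\}^{\mathbb N}\}$, are both valid alternatives to the paper's direct constructions.

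There is, however, a genuine gap: you never establish any implication \emph{from} condition~(8). You declare $(7)\Leftrightarrow(8)$ ``routine'', but only $(7)\Rightarrow(8)$ (equivalently $(3)\Rightarrow(8)$) is actually covered by your remark about truncating the approximating simple function to the finite-measure set $A$. The reverse direction $(8)\Rightarrow(7)$ is \emph{not} routine, because integrable functions need not be essentially bounded on sets of finite measure (consider $x\mapsto x^{-1/2}$ on $[0,1]$ with Lebesgue measure), so $\mathcal I\not\subseteq L^\infty_G$ in general and your closedness trick cannot be reused. Nor can a general $g\in\mathcal I$ be approximated in the $F_\mu$-topology by a simple function without already knowing essential boundedness on the relevant $A$. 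The paper closes this loop by proving $(8)\Rightarrow(4)$ contrapositively: given a disjoint sequence $\{A_n\}\subseteq F$ with $\mu(A_n)>0$, the function $h=\sum_n \frac{n^2}{\mu(A_n)}\chi_{A_n}$ satisfies $\int_X h\,d\mu=\infty$, and any $g\in B(h,F,1)$ obeys $|g|\geq \frac{n^2}{\mu(A_n)}-1$ a.e.\ on each $A_n$, whence $\int_F|g|\,d\mu=\infty$ as well, so $B(h,F,1)\cap\mathcal I=\emptyset$. You need a construction of this type; the remark ``$\mathcal S\supseteq\mathcal S\cap\mathcal I$'' does not supply one.
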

\begin{proof}
	$(1)\implies (2)$ is trivial. $(2)\implies (3)$ follows from Theorem \ref{component}. $(3)\implies (1)$ follows from Lemma \ref{pathconnected}. $(3)\iff (4)$ follows from Theorem \ref{clopen} (2). 
	
	$(3)\implies (5)$: Let $f,g\in \mathcal{M}(X,\mathcal{A},\mu)$ and $B(fg,F,\epsilon)$ be a basic open set containing $fg$ in the space $\mathcal{M}_F(X,\mathcal{A},\mu)$, here $F\in \mathcal{A}_>$ and $\epsilon>0$. Then there exists $M>0$ in $\mathbb{R}$ such that $|f(x)|\leq M$ and $|g(x)|\leq M$ for all $x\in F$, almost everywhere($\mu$) on it. Choose any $\delta>0$ such that $\delta^2+2M\delta<\epsilon$. Thus $B(f,F,\delta)B(g,F,\delta)\subseteq B(fg,F,\epsilon)$. Thus it is proved that $\mathcal{M}_F(X,\mathcal{A},\mu)$ is a topological ring.
	
	$(5)\implies (3)$: Let $f\in \mathcal{M}(X,\mathcal{A},\mu)$ and $G\in \mathcal{A}_>$. Now $B(\underline{0},G,1)$ is a basic open set in $\mathcal{M}_F(X,\mathcal{A},\mu)$ containing $\underline{0}$. By the assumed condition $(5)$, there exists $\delta>0$ and $F_1,F_2\in \mathcal{A}_>$ such that $B(f,F_1,\delta)B(\underline{0},F_2,\delta)\subseteq B(0,G,1)$. It follows that $|f(x)\frac{\delta}{2}|<1$ for all $x$, almost everywhere($\mu$) on $G$ and hence $f\in L_G^\infty(X,\mathcal{A},\mu)$. 
	
	 An analogous proof can be constructed for the equivalence $(3)\iff (6)$.

	$(4)\implies (7)$: Let $f\in \mathcal{M}(X,\mathcal{A},\mu)$ and $B(f,F,\epsilon)$ be a basic open set in $\mathcal{M}_F(X,\mathcal{A},\mu)$ containing $f$, where $F\in \mathcal{A}_>$ and $\epsilon>0$. By (4), $F=F_1\cup F_2\cup\cdots \cup F_m$, where $F_i$'s are {atoms} for all $i=1,2,\cdots, m$. Without loss of generality, we can assume that $F_i\cap F_j=\emptyset$ where $i,j\in \{1,2,\cdots, m\}$ and $i\neq j$. By Theorem \ref{atom}, $f(x)=c_i$ for all $x\in F_i\setminus B_i$, where $\mu(B_i)=0$, $c_i\in \mathbb{R}$, for all $i=1,2,\cdots, m$. Now consider the simple function $s=\sum\limits_{i=1}^m c_i \chi_{F_i}$. Note that, $s\in \mathcal{S}\cap B(f,F,\epsilon)$. Consequently, $\mathcal{S}$ is dense in $\mathcal{M}_F(X,\mathcal{A},\mu)$.
	
	$(7)\implies (8)$: Let $f\in \mathcal{M}(X,\mathcal{A},\mu)$ and $B(f,F,\epsilon)$ be a basic open set containing $f$, where $F\in \mathcal{A}_>$ and $\epsilon>0$. By (7), we can find a simple function $s=\sum\limits_{i=1}^m c_i\chi_{A_i}\in B(f,F,\epsilon)$ which implies that $ \sum\limits_{i=1}^m c_i\chi_{A_i\cap F}\in B(f,F,\epsilon)$. Consider the function $$h(x)=\begin{cases}
		\sum\limits_{i=1}^m(c_i\chi_{A_i})(x),& \text{ when }x\in F\\
		0,&\text{ otherwise}
	\end{cases}$$ Then $h\in \mathcal{I}\cap B(f,F,\epsilon)$.

$(8)\implies (4)$: If possible, let there be a measurable set $F$ with strictly positive finite measure such that $F$ cannot be written as the union of a finite number of {atoms}. As in the proof of Theorem \ref{clopen}(2), We can find a sequence $\{A_n\}_n$ of disjoint measurable sets satisfying $A_n\subseteq F$ and $0<\mu(A_n)<\mu(F)$. Consider the measurable function $$h(x)=\begin{cases}
	\frac{n^2}{\mu(A_n)}&\text{, when }x\in A_n, \text{ }n\in \mathbb{N}\\
	0&\text{, otherwise}
\end{cases}$$ It is easy to check that $\int\limits_X h d\mu=\infty$ and hence $B(h,F,1)\cap \mathcal{I}=\emptyset$. Which is a contradiction.

$(4)\implies (9)$: Let $B(\underline{0},F,\epsilon)$ be a basic open set containing $\underline{0}$, where $F\in \mathcal{A}_>$ and $\epsilon>0$. By (4), $F=F_1\cup F_2\cup\cdots\cup F_m$, where $F_i$'s are {atoms} for all $i=1,2,\cdots ,m$. Without loss of generality, we can assume that $F_i\cap F_j=\emptyset$, where $i,j\in \{1,2,\cdots, m\}$ and $i\neq j$. Let $\mathbb{Q}=\{e_1,e_2,\cdots\}$ be an enumeration of $\mathbb{Q}$, the set of all rational numbers. Let $T=\{e_i \chi F_p:i\in \mathbb{N} \text{ and }p\in \{1,2,\cdots, m\}\}.$ Let $S=$ the set of all possible finite sums of members of $T$. Then $S$ is countable. It is easy to check, $\mathcal{M}(X,\mathcal{A},\mu)=B(\underline{0},F,\epsilon)+S$. Consequently, $\mathcal{M}_F(X,\mathcal{A},\mu)$ is $\aleph_0$-bounded.

$(9)\implies (4)$: If possible, let there be a measurable set $A$ with strictly positive finite measure such that $A$ cannot be written as the union of finite number of {atoms}. As in the proof of Theorem \ref{clopen}(2), we can find a sequence $\{A_n\}_n$ of disjoint measurable sets satisfying $A_n\subseteq F$ and $0<\mu(A_n)<\mu(A)$. Now, $B(\underline{0},A,1)$ is an open set containing $\underline{0}$. Since $\mathcal{M}_F(X,\mathcal{A},\mu)$ is {$\aleph_0$-bounded}, then there is a countable set $\{f_1,f_2,\cdots\}$ of measurable functions such that $B(\underline{0},A,1)+\{f_1,f_2,\cdots\}=\mathcal{M}(X,\mathcal{A},\mu)$. Consider the measurable function $$p(x)=\begin{cases}
	f_n(x)+2&\text{, when }x\in A_n,\text{ }n\in \mathbb{N}\\
	0&\text{, otherwise}
\end{cases}$$ Note that, $p\notin B(\underline{0},F,1)+\{f_1,f_2,\cdots\}$. Which is a contradiction.
\end{proof}
	\begin{corollary}
		A measure $\mu$ on the $\sigma$-algebra $\mathcal{A}$ on $X$ which renders the space $\mathcal{M}_F(X,\mathcal{A},\mu)$ path connected or connected is purely atomic. 
	\end{corollary}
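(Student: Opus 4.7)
The plan is to combine condition (4) of Theorem \ref{equivpath} with the standing hypothesis (stated in the Introduction) that every measurable set of infinite measure contains a measurable subset with strictly positive finite measure. Recall that $\mu$ being purely atomic means each $E\in\mathcal{A}$ with $\mu(E)>0$ contains a $\mu$-atom, so this is exactly what I need to produce.

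First, I would assume $\mathcal{M}_F(X,\mathcal{A},\mu)$ is path connected (equivalently, connected), and apply Theorem \ref{equivpath} to get that any measurable set with strictly positive finite $\mu$-measure is a finite union of $\mu$-atoms. Now fix $E\in\mathcal{A}$ with $\mu(E)>0$; I need to exhibit a $\mu$-atom inside $E$.

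Next I would split into two cases according to the measure of $E$. If $0<\mu(E)<\infty$, then condition (4) directly writes $E$ as $F_1\cup\cdots\cup F_m$ with each $F_i$ a $\mu$-atom, so $F_1\subseteq E$ is the desired atom. If $\mu(E)=\infty$, the standing assumption on $(X,\mathcal{A},\mu)$ produces $F\in\mathcal{A}$ with $F\subseteq E$ and $0<\mu(F)<\infty$; the first case applied to $F$ yields a $\mu$-atom contained in $F\subseteq E$. In either case $E$ contains a $\mu$-atom, so $\mu$ is purely atomic.

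There is no real obstacle here; the only thing to be slightly careful about is not forgetting the $\mu(E)=\infty$ case, where one genuinely needs the blanket hypothesis from the Introduction rather than Theorem \ref{equivpath} alone. The existence of at least one atom (so that $\mu$ is atomic in the first place) is automatic, since by the hypothesis $\mu(X)>0$ and hence $X$ itself contains some $F\in\mathcal{A}_>$, which by case one contains an atom.
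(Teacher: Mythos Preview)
Your proof is correct and follows exactly the approach the paper intends: the corollary is stated without proof immediately after Theorem~\ref{equivpath}, so it is meant to be a direct consequence of condition~(4) together with the standing hypothesis from the Introduction, which is precisely what you spell out. Your handling of the $\mu(E)=\infty$ case via the blanket assumption is the right (and necessary) detail.
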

The following is an example of a {purely atomic measure} $\mu$ on $G$ suitable measurable space $(X,\mathcal{A})$ for which $\mathcal{M}_F(X,\mathcal{A},\mu)$ is not path connected.

\begin{example}\label{example}
	Let $X=\mathbb{N}$, $\mathcal{A}=\mathcal{P}(\mathbb{N})=$ the power set of $\mathbb{N}$ and $\mu:\mathcal{P}(\mathbb{N})\rightarrow [0,\infty)$ be the measure defined as follows: $\mu(\{n\})=\frac{1}{2^n}$ for all $n\in \mathbb{N}$. Then $\mu$ is a {purely atomic measure} on $(X,\mathcal{A})$ and each singleton $\{n\}$, $n\in \mathbb{N}$ is an {atom} and these are the only {atoms} in this measure space. However $\mu(\mathbb{N})=\sum\limits_{n\in \mathbb{N}}\frac{1}{2^n}=1$, which shows that $\mathbb{N}$ is a $\mu$-measurable set with finite nonzero $\mu$-measure, which is never expressible as the union of finitely many $\mu$-{atoms}. It follows from Theorem \ref{equivpath} (vide equivalence of (1) and (4)) that $\mathcal{M}_F(X,\mathcal{A},\mu)$ is not path connected. 
\end{example} 
We are now going to prove that the space $\mathcal{M}_F(X,\mathcal{A},\mu)$ is never extremally disconnected. Recall that a topological space $Z$ is called extremally disconnected if the closure of every open set is open in $Z$. The following subsidiary result will be helpful towards proving such an assertion. For any $G\in \mathcal{A}_>$, let $G^+=\{f\in \mathcal{M}(X,\mathcal{A},\mu):f(x)>0 \text{ almost}$
$ \text{everywhere}(\mu)\text{ on }G\}$ and  $G^0=\{f\in \mathcal{M}(X,\mathcal{A},\mu):f(x)\geq0 \text{ almost}$  $\text{everywhere}(\mu) \text{ on }G\}$. \begin{lemma}\label{edlemma}
	\begin{enumerate}
		\item If $G\in \mathcal{A}_>$ is a $\mu$-{atom}, then $G^+$ is an open set in $\mathcal{M}_F(X,\mathcal{A},\mu)$. On the other hand if $\mu$ is non {atomic}, then for any $G\in \mathcal{A}_>$, $G^+$ is not open in $\mathcal{M}_F(X,\mathcal{A},\mu)$.
		\item  If $G$ is not a $\mu$-{atom}, then the closure of the interior of $G^+$ in $\mathcal{M}_F(X,\mathcal{A},\mu)$ is $G^+$ itself.
		\item If $G$ is a $\mu$-{atom}, then the closure of $G^+$ in $\mathcal{M}_F(X,\mathcal{A},\mu)$ is $G^0$ and $G^0$ is not open in $\mathcal{M}_F(X,\mathcal{A},\mu)$.
	\end{enumerate}
\end{lemma}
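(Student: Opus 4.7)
The plan is to establish all three parts by exhibiting, inside every basic neighborhood $B(f,A,\epsilon)$, an explicit measurable perturbation of $f$ that witnesses the desired membership (or non-membership) relation. The two workhorses are Theorem \ref{atom}, which collapses any measurable function to a constant almost everywhere on a $\mu$-atom, and the decomposition technique from the proof of Theorem \ref{clopen}(2), which produces inside any non-atomic measurable set a pairwise disjoint sequence of positive-measure subsets.

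For part (1), suppose first that $G$ is a $\mu$-atom. Given $f\in G^+$, Theorem \ref{atom} forces $f\equiv c$ almost everywhere on $G$ for some real $c$, and $c>0$ since $f\in G^+$. Then $B(f,G,c/2)\subseteq G^+$, because every $g$ in this ball satisfies $g>c/2>0$ almost everywhere on $G$, so $G^+$ is open. For the nonatomic case, I would exhibit an $f\in G^+$ whose essential infimum on $G$ vanishes: take a pairwise disjoint sequence $\{G_n\}$ of positive-measure subsets of $G$ and set $f=\sum_{n\in\mathbb{N}}\frac{1}{n}\chi_{G_n}+\chi_{X\setminus\bigcup_n G_n}$. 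Then $f>0$ everywhere yet the essential infimum of $f$ on $G$ is $0$. For any basic neighborhood $B(f,A,\epsilon)$ I split on whether $\mu(G\setminus A)>0$. If it is, the function agreeing with $f$ on $A$ and equal to $-1$ on $G\setminus A$ lies in the ball and is negative on a positive-measure subset of $G$. Otherwise $G\subseteq A$ modulo a null set, so $E=\{x\in G:f(x)<\epsilon/2\}$ has positive measure and $f-\frac{\epsilon}{2}\chi_E$ lies in $B(f,A,\epsilon)$ but is negative on $E$. In either case $f\notin\operatorname{int}(G^+)$.

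For part (2), with $G$ not a $\mu$-atom, I would first compute $\operatorname{int}(G^+)=\{f:\text{essential infimum of }f\text{ on }G\text{ is strictly positive}\}$ by the same case-split as in part (1). The inclusion $G^+\subseteq\overline{\operatorname{int}(G^+)}$ then follows by truncation: for $f\in G^+$, set $f_n=\max(f,1/n)$ on $G$ and $f_n=f$ off $G$; each $f_n$ has essential infimum at least $1/n$ on $G$, hence lies in $\operatorname{int}(G^+)$, and $|f_n-f|\leq 1/n$ uniformly on $X$, so $f_n$ is eventually in every basic neighborhood of $f$. The reverse inclusion $\overline{\operatorname{int}(G^+)}\subseteq G^+$ is the main obstacle: one must rule out every $f$ for which $\{x\in G:f(x)\leq 0\}$ has positive measure. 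This is the delicate point, because seemingly borderline candidates such as the zero function can be approximated by arbitrarily small positive constants drawn from $\operatorname{int}(G^+)$, so the argument must extract, from the non-atom hypothesis, a neighborhood of the prospective boundary point on which every interior approximant is bounded away from zero on a sufficiently large portion of $G$, thereby forcing $f>0$ almost everywhere on $G$.

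For part (3), with $G$ a $\mu$-atom, $G^+$ is already open by (1), so its closure is the closure of its interior. For $f\in G^0$, Theorem \ref{atom} gives $f\equiv c\geq 0$ almost everywhere on $G$; given any basic neighborhood $B(f,A,\epsilon)$, the function equal to $\max(f,\epsilon/2)$ on $G$ and to $f$ elsewhere lies in the ball and in $G^+$, so $G^0\subseteq\overline{G^+}$. The reverse inclusion follows because $G^0$ is closed: if $\mu(\{x\in G:f(x)<-\delta\})>0$ for some $\delta>0$, then $B(f,E_\delta,\delta/2)$ with $E_\delta=\{x\in G:f(x)<-\delta\}$ is a neighborhood of $f$ disjoint from $G^0$. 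Finally, $G^0$ is not open because $\underline{0}\in G^0$ while the constant function $-\epsilon/2$ belongs to every basic neighborhood $B(\underline{0},A,\epsilon)$ of $\underline{0}$ yet violates the defining inequality of $G^0$.
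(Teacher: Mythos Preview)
Your handling of parts (1) and (3) is correct and closely parallels the paper's, the only differences being cosmetic: you perturb by truncations and indicator functions where the paper uses the uniform shifts $f\pm\tfrac{\epsilon}{2}$.

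The gap is in part (2), and you have in fact already put your finger on it. You flag the reverse inclusion $\overline{\operatorname{int}(G^+)}\subseteq G^+$ as ``the main obstacle'', note that $\underline{0}$ is approximated from within $\operatorname{int}(G^+)$ by small positive constants, and then assert that the non-atom hypothesis must somehow supply a separating neighborhood---without ever producing one. You cannot: the inclusion is false. For any $G\in\mathcal{A}_>$ and any basic neighborhood $B(\underline{0},H,\epsilon)$, the constant function $\underline{\epsilon/2}$ lies both in that ball and in $\operatorname{int}(G^+)$ (via $B(\underline{\epsilon/2},G,\epsilon/4)\subseteq G^+$), so $\underline{0}\in\overline{\operatorname{int}(G^+)}\setminus G^+$. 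The paper's own proof shares this lacuna: it verifies $\operatorname{int}(G^+)=G^*$ and $G^+\subseteq\overline{G^*}$, then simply asserts equality. The correct identity is $\overline{\operatorname{int}(G^+)}=G^0$: one containment follows from $\operatorname{int}(G^+)\subseteq G^0$ together with the closedness of $G^0$ (your own argument in part~(3), which nowhere uses that $G$ is an atom), and the other from the shift $f\mapsto f+\tfrac{\epsilon}{2}$. This amended statement still delivers the intended application, since $G^0$ is not open.
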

\begin{proof}
	(1). First let $G$ be a $\mu$-{atom} and $f\in G^+$. Then it follows from Theorem \ref{atom} that, there exists a $c>0$ in $\mathbb{R}$ such that $f(x)=c$ almost everywhere($\mu$) on $G$. Consequently, $B(f,G,\frac{1}{2}c)\subseteq G^+$, thus $f$ is an interior point of $G^+$ and hence $G^+$ is open in $\mathcal{M}_F(X,\mathcal{A},\mu)$.
	
	Let $\mu$ be {non atomic} and $G\in \mathcal{A}_>$, then $G$ is not a $\mu$-{atom}. It follows from Lemma 7.1 in \cite{3} that $G$ has a decomposition: $G=\bigcup\limits_{n\in \mathbb{N}} G_n$, where $G_n\cap G_m=\emptyset$ if $m\neq n$ and $\mu(G_n)>0  $ for each $n\in \mathbb{N}$. Let $f:X\rightarrow \mathbb{R}$ be defined as follows, $$f(x)=\begin{cases}
		\frac{1}{n},&\text{ when }x\in G_n,\text{ where }n\in \mathbb{N}\\
		0,&\text{ elsewhere }
		
	\end{cases}$$ Then $f\in G^+$ and $f$ has the following property: for all sufficiently small $\epsilon>0$ in $\mathbb{R}$, $f<\epsilon$ on a set $G$ of positive $\mu$-measure. We assert that $f$ is not an interior point of $G^+$ and hence $G^+$ is not open in $\mathcal{M}_F(X,\mathcal{A},\mu)$.

	\underline{ Proof of the assertion:} If possible, let $f$ be an interior point of $G^+$. Then there exist $H\in \mathcal{A}_>$ and $\epsilon>0$ in $\mathbb{R}$ such that $f\in B(f,H,\epsilon)\subseteq G^+$. It is clear that $f-\frac{1}{2}\epsilon \in B(f,H,\epsilon)$, hence $f-\frac{1}{2}\epsilon\in G^+$. But since $f<\frac{1}{2}\epsilon$ on a set of positive $\mu$-measure, this implies that $f-\frac{1}{2}\epsilon$ takes negative values on a subset of $G$ of a positive $\mu$-measure and hence $f-\frac{	1}{2}\epsilon\notin G^+$, a contradiction.
	
	(2): Let $\mu$ be non {atomic} and $G\in \mathcal{A}_>$. We have seen in (1) that $G^+$ contains at least one function $f$ with the property that for sufficiently small $\epsilon>0$, $f<\epsilon$ on a set in $G$ of positive $\mu$-measure and such function $f$ does not belong to the interior of $G^+$.
	
	 Let $G^*=\{g\in G^+:\text{ there exists an }\epsilon>0\text{ such that }g\geq \epsilon\text{ almost everywhere}(\mu)$  $\text{ on }G\}$. Then for $g\in G^*$, $B(g,G,\frac{1}{2}\epsilon)\subseteq G^+$ (easy check), this shows that $g$ is an interior point of $G^+$. We have already shown in the first paragraph of the present proof that no point in $G^+$, outside $G^*$ can be an interior point of $G^+$. Hence, we can say that the interior of $G^+$ is $G^*$. We now confirm that $G^*$ is dense in $G^+$, indeed for any $f\in G^+$, we have $f(x)>0$ almost everywhere($\mu$) on $G$ and for $H\in \mathcal{A}_>$, $\epsilon>0$, $B(f,H,\epsilon)\cap G^*\neq \emptyset$. Indeed $f+\frac{1}{2}\epsilon\in B(f,H,\epsilon)\cap G^*$, because $f+\frac{1}{2}\epsilon>\frac{1}{2}\epsilon$ almost everywhere($\mu$) on $G$. Thus it is proved that $G^+=$ the closure of the interior of $G$ in $\mathcal{M}_F(X,\mathcal{A},\mu)$.
	 
	 (3): Here our hypothesis is $G$ is a $\mu$-{atom}. Let $f\in \mathcal{M}(X,\mathcal{A},\mu)\setminus G^0$. It follows from Theorem \ref{atom} that there exists a $c<0$ such that $f=\underline{c}$ almost everywhere($\mu$) on $G$. Consequently, $B(f,G,\frac{1}{2}|c|)\cap G=\emptyset$. This proves that $G^0$ is closed in $\mathcal{M}_F(X,\mathcal{A},\mu)$. It is not hard to show that $\underline{0}\in G^0$ is not an interior point of $G^0$ in the space $\mathcal{M}_F(X,\mathcal{A},\mu)$ (and $G^0$ is therefore not an open set in $\mathcal{M}_F(X,\mathcal{A},\mu)$): we argue by contradiction and assume that $\underline{0}$ is an interior point of $G^0$, therefore there exists $c>0$ in  $\mathbb{R}$ and $H\in \mathcal{A}_>$ such that $B(\underline{0},H,\epsilon)\subseteq G^0$. But the function $-\frac{1}{2}\epsilon\in B(\underline{0},H,\epsilon)$, which implies that $-\frac{1}{2}\epsilon\in G^0$- a contradiction. To complete the theorem, it remains to show that $\cl(G^+)=G^0$ in $\mathcal{M}_F(X,\mathcal{A},\mu)$. Indeed if $g\in G^0$ and $ A\in \mathcal{A}_>$, $\epsilon>0$, then $B(g,A,\epsilon)\cap G^+\neq \emptyset$, because the function $g+\frac{1}{2}\epsilon$ is a member of this non empty set.
\end{proof}
\begin{theorem}
	For any type of measure $\mu$, atomic or non atomic, the topological space $\mathcal{M}_F(X,\mathcal{A},\mu)$ is never extremally disconnected. This follows immediately from Lemma \ref{edlemma}.
\end{theorem}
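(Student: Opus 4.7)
The plan is to exhibit, in every case, an open set $U\subseteq\mathcal{M}_F(X,\mathcal{A},\mu)$ whose closure $\cl(U)$ fails to be open; this immediately refutes extremal disconnectedness. The natural dichotomy is exactly the one that already drives Lemma \ref{edlemma}, namely whether or not $\mathcal{A}$ admits a $\mu$-atom. Either $\mathcal{A}$ contains an atom of finite positive measure, or $\mu$ is nonatomic on $\mathcal{A}$ (and in either case $\mathcal{A}_>$ is nonempty by the standing hypothesis $\mu(X)>0$ together with the assumption that every set of infinite measure contains a set of strictly positive finite measure).

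In the first case, pick any $\mu$-atom $G\in\mathcal{A}_>$. Lemma \ref{edlemma}(1) says that $G^+$ is open in $\mathcal{M}_F(X,\mathcal{A},\mu)$, and Lemma \ref{edlemma}(3) computes $\cl(G^+)=G^0$ together with the assertion that $G^0$ is not open. So the choice $U:=G^+$ produces an open set whose closure is not open, and the space fails to be extremally disconnected.

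In the second case, $\mu$ is nonatomic; fix any $G\in\mathcal{A}_>$. By Lemma \ref{edlemma}(2) the closure of the interior of $G^+$ equals $G^+$ itself, so setting $U$ to be this interior yields an open set with $\cl(U)=G^+$. But Lemma \ref{edlemma}(1) tells us that $G^+$ is not open under the nonatomic hypothesis, so once again $\cl(U)$ is not open.

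The only real bookkeeping concern, and the single place where one could slip, is making sure the dichotomy is exhaustive in the mixed situation where $\mu$ has both atomic and nonatomic parts; but this is harmless, since Case 1 only requires the existence of one $\mu$-atom in $\mathcal{A}_>$, so any measure admitting such an atom is covered there and the remaining measures are exactly those handled by Case 2. I do not anticipate a genuine obstacle: beyond invoking Lemma \ref{edlemma} in the two regimes, the argument is a one-line case split.
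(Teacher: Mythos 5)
Your proposal is correct and takes essentially the same route as the paper, whose entire proof is the single clause ``this follows immediately from Lemma \ref{edlemma}'': your two-case unpacking (if an atom $G$ exists, $G^{+}$ is open with non-open closure $G^{0}$ by parts (1) and (3); if $\mu$ is nonatomic, the interior of $G^{+}$ is open with non-open closure $G^{+}$ by parts (2) and (1)) is exactly the intended reading, and the dichotomy is exhaustive as you observe.
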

It is easy to see, as we prove by the following elementary arguments that $\mathcal{M}_F(X,\mathcal{A},\mu)$ is never compact. In fact, for any $G\in \mathcal{A}_>$, $\{B(f,G,1):f\in \mathcal{M}(X,\mathcal{A},\mu)\}$ is an open covering for this space and for any finite subfamily $\{B(f_i,G,1):i=1,3,\cdots, n\}$ of this cover one can easily check that $\bigcup\limits_{i=1}^nB(f_i,G,1)\subsetneq\mathcal{M}(X,\mathcal{A},\mu)$, indeed the function $f:X\rightarrow \mathbb{R}$ defined by $f(x)=(|f_1(x)|+2)(|f_2(x)|+2)\cdots(|f_n(x)|+2)$ is a measurable function on $X$, but $f\notin B(f_i,G,1)$ for any $i\in\{1,2,\cdots, n\}$.
We now show that for a large class of measures $\mu$, the space $\mathcal{M}_F(X,\mathcal{A},\mu)$ is not even locally compact.
\begin{theorem}\label{locallycompact}
	Let $(X,\mathcal{A},\mu)$ be a measure space such that, there exists an $F\in \mathcal{A}_>$ such that $F$ cannot be written as the union finitely many $\mu$-{atoms}. Then the interior of any compact subset $K$of $\mathcal{M}_F(X,\mathcal{A},\mu)$ is empty.
\end{theorem}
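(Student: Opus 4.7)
My plan is to argue by contradiction via a total-boundedness obstruction. Suppose some compact $K\subseteq\mathcal{M}_F(X,\mathcal{A},\mu)$ has nonempty interior. Since $\mathcal{M}_F(X,\mathcal{A},\mu)$ is a topological group, hence a homogeneous space, and translates of compact sets are compact, I may translate $K$ so that $\underline{0}\in\operatorname{int}(K)$; then $B(\underline{0},G,\epsilon)\subseteq K$ for some $G\in\mathcal{A}_>$ and $\epsilon>0$.

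From the hypothesis that $F$ is not expressible as a finite union of $\mu$-atoms, the inductive construction already carried out in the proof of Theorem \ref{clopen}(2) furnishes a pairwise disjoint sequence $\{B_n\}_n\subseteq\mathcal{A}$ with $B_n\subseteq F$ and $\mu(B_n)>0$ for every $n\in\mathbb{N}$. Using this sequence I set
\[
h_n=\tfrac{\epsilon}{2}\,\chi_{B_n}\qquad(n\in\mathbb{N}).
\]
Since $|h_n|\le\epsilon/2<\epsilon$ pointwise on $X$, each $h_n$ belongs to $B(\underline{0},G,\epsilon)\subseteq K$, irrespective of how $G$ meets $F$. For $m\neq n$ the supports $B_m$ and $B_n$ are disjoint subsets of $F$, so $|h_m-h_n|$ equals $\epsilon/2$ on the positive-measure set $B_m\cup B_n\subseteq F$; consequently $h_m-h_n\notin B(\underline{0},F,\delta)$ for any $\delta\le\epsilon/2$.

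To extract a contradiction, I cover $K$ by the open family $\{f+B(\underline{0},F,\epsilon/8):f\in K\}$ and use compactness to select a finite subcover with centres $f_1,\dots,f_k$. Each $h_n$ lies in some translate $f_{i(n)}+B(\underline{0},F,\epsilon/8)$, and since there are countably many $h_n$'s but only $k$ translates, pigeonhole produces $m\neq n$ with $i(m)=i(n)$; the triangle inequality then forces $|h_m-h_n|<\epsilon/4$ almost everywhere on $F$, in direct conflict with the value $\epsilon/2$ taken on $B_m\cup B_n$.

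The only delicate point is the calibration of the amplitude $\epsilon/2$: it must be strictly less than $\epsilon$ so that every $h_n$ sits inside $B(\underline{0},G,\epsilon)$, yet strictly greater than twice the cover radius $\epsilon/8$ so that distinct $h_n$'s cannot cohabit a single translate. Everything else is standard topological-group bookkeeping, and the hypothesis on $F$ intervenes only through the disjoint sequence $\{B_n\}$ supplied by Theorem \ref{clopen}(2); notably, no interaction between $F$ and $G$ needs to be assumed, because the uniform bound $|h_n|\le\epsilon/2$ sidesteps that issue.
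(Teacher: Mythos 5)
Your proof is correct and follows essentially the same route as the paper's: both extract the disjoint sequence $\{B_n\}$ from the proof of Theorem \ref{clopen}(2), place the bump functions $\tfrac{\epsilon}{2}\chi_{B_n}$ inside the basic neighborhood contained in $K$, and contradict compactness by a finite subcover plus pigeonhole, using balls indexed by a measurable set containing $F$ so that the $\epsilon/2$-separation on $B_m\cup B_n$ is detected. Your only deviations — translating $K$ so that $\underline{0}$ is interior, covering with $B(\cdot,F,\epsilon/8)$ rather than the paper's $B(\cdot,F\cup G,\epsilon/4)$ — are cosmetic.
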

\begin{proof}
	If possible, let the interior of $K$ be non empty. Then there exists $f\in \mathcal{M}(X,\mathcal{A},\mu)$, $\epsilon>0$ and a $G\in\mathcal{A}_>$ such that $B(f,G,\epsilon)\subseteq K$ and this further implies that $B(f,F\cup G,\epsilon)\subseteq K$. Since $F$ cannot be written as the union of finitely many $\mu$-{atoms}, it follows from the proof of Theorem \ref{clopen} (2), that there is a sequence of pairwise disjoint members $\{B_n\}_n$ of $\mathcal{A}_>$ such that $B_n\subseteq F$ for each $n\in \mathbb{N}$. Since $K\subseteq \bigcup\limits_{g\in K}B(g,F\cup G,\frac{1}{4}\epsilon)$, the compactness of $K$ implies that there are finitely many $ g_1,g_2,\cdots ,g_n\in K$ such that $K\subseteq\bigcup\limits_{i=1}^n B(g_i,F\cup G,\frac{1}{4}\epsilon) $. Since for each $m\in \mathbb{N}$, $f+\frac{\epsilon}{2}\chi_{B_m}\in B(f,G,\epsilon)$, it follows that $f+\frac{\epsilon}{2}\chi_{B_m}\in \bigcup\limits_{i=1}^nB(g_i,F\cup G,\frac{\epsilon}{4})$. So, there exist $i,j,k\in \mathbb{N}$, $k\leq n$ and $i\neq j$ such that $\{f+\frac{\epsilon}{2}\chi_{B_i}, f+\frac{\epsilon}{2}\chi_{B_j}\}\subseteq B(g_k,F\cup G,\frac{\epsilon}{4})$. Consequently, there exist measurable sets $A_i,A_j\in \mathcal{A}$ with $\mu(A_i)=\mu(A_j)=0$ such that $|(f+\frac{\epsilon}{2}\chi_{B_p})-g_k(x)|<\frac{\epsilon}{4}$ for all $x\in (F\cup G)\setminus A_p$, $p\in \{i,j\}$. Now choose a point $y\in (F\cup G)\setminus (A_i\cup A_j)$ such a choice is permissible as $\mu(A_i\cup A_j)=0$, while $\mu(F\cup G)>0$. We see that $\frac{\epsilon}{2}<|\frac{\epsilon}{2}\chi_{B_i}(y)-\frac{\epsilon}{2}\chi_{B_j}(y)|=|(f+\frac{\epsilon}{2}\chi_{B_i})(y)-(f+\frac{\epsilon}{2}\chi_{B_j})(y)|=|((f+\frac{\epsilon}{2}\chi_{B_i})(y)-g_k(y))-((f+\frac{\epsilon}{2}\chi_{B_j})(y)-g_k(y))|<\frac{\epsilon}{4}+\frac{\epsilon}{4}=\frac{\epsilon}{2}$- a contradiction.
\end{proof}
\begin{remark}
	If $\mu$ is a {nonatomic measure} on $(X,\mathcal{A})$, then the space $\mathcal{M}_F(X,\mathcal{A},\mu)$ is nowhere locally compact. Thus the only candidates for the measure $\mu$ to render $\mathcal{M}_F(X,\mathcal{A},\mu)$ locally compact are the {atomic measures}. However the measure $\mu$ in Example \ref{example} on $\mathcal{P}(\mathbb{N})$ is a {purely atomic measure}, although is still not locally compact, this follows from Theorem \ref{locallycompact} above.
\end{remark}
\begin{remark}
	The hypothesis in Theorem \ref{locallycompact} viz., that, there exists a set $F\in \mathcal{A}_>$ which cannot be written as the union of finitely many atoms is sufficient only but not necessary for the validity of Theorem \ref{locallycompact}. Indeed, if $x\in X$ and $\delta_x$ is the Dirac delta measure on $\mathcal{P}(X)$ defined in the usual manner: $\delta_x(E)=\begin{cases}
		1\text{, if }x\in E\\
		0\text{, if }x\notin E
	\end{cases}$ $E\in \mathcal{P}(X)$, then $\mathcal{M}(X,\mathcal{P}(X),\delta_x)=\mathbb{R}^X$, which is locally compact if and only if $X$ is a finite set.
\end{remark}
\begin{remark}
	If $\mathcal{M}(X,\mathcal{A},\mu)$ is locally compact, then each $A\in \mathcal{A}_>$ is expressible as the union of finitely many $\mu$-atoms (by Theorem \ref{locallycompact}). Hence, we get from Theorem \ref{clopen}(2) and Lemma \ref{pathconnected} that $\mathcal{M}(X,\mathcal{A},\mu)$ is path connected/ connected in this case.

\end{remark}
\section{ Countability properties of $\mathcal{M}_F(X,\mathcal{A},\mu)$}
\begin{definition}
	We call a measure $\mu$ on the measurable space $(X,\mathcal{A})$ {hemifinite} if there exists a sequence $\{A_n\}_n$ of members of $\mathcal{A}_>$ with the following property: given $F\in \mathcal{A}_>$, there exists an $n\in \mathbb{N}$ such that $F$ is contained in $A_n$ almost everywhere($\mu$) i.e., $\mu(F\setminus A_n)=0$. Since at the very beginning in the introductory section, we have already made the hypothesis that every $\mu$-measurable set with infinite $\mu$-measure contains a measurable set with strictly positive finite $\mu$-measure, it follows from the above definition that $\mu(X\setminus \bigcup\limits_{i=1}^\infty A_n)=0$. In other words $(X,\mathcal{A},\mu)$ becomes a {$\sigma$-finite} measure space. Thus we have proved that every {hemifinite} measure space  is {$\sigma$-finite}. It is trivial that a finite measure space is {hemifinite}. Thus the condition hemifiniteness for a measure lies between finiteness and {$\sigma$-finiteness}. 
\end{definition}

 The following result tells us that for {nonatomic measure} spaces, {hemifiniteness} and and finiteness conditions coincide.
 \begin{theorem}
 	Let $(X,\mathcal{A},\mu)$ be a nonatomic {hemifinite} measure space. Then $\mu(X)<\infty$.
 \end{theorem}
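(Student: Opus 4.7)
My plan is to argue by contradiction. Suppose $\mu(X)=\infty$ and let $\{A_n\}_n\subseteq\mathcal{A}_>$ be a sequence witnessing hemifiniteness. As the author already observes just after the definition, the standing assumption that every set of infinite measure contains a set of strictly positive finite measure forces $\mu(X\setminus\bigcup_n A_n)=0$, so $\mu(\bigcup_n A_n)=\infty$. Disjointifying via $B_n := A_n\setminus\bigcup_{k<n}A_k$ yields a pairwise disjoint sequence in $\mathcal{A}$ with $\bigsqcup_n B_n=\bigcup_n A_n$, and therefore $\sum_n\mu(B_n)=\infty$. In particular, the index set $N:=\{n:\mu(B_n)>0\}$ is infinite.

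The key input is the classical consequence of nonatomicity that every set of positive measure admits measurable subsets of arbitrarily small positive measure (Sierpi\'nski's theorem; it also follows quickly from Lemma 7.1 of \cite{3}, which the paper is already using). Applying this to each $B_n$ with $n\in N$, I pick $D_n\in\mathcal{A}$ with $D_n\subseteq B_n$ and $0<\mu(D_n)<2^{-n}$, then set $F:=\bigsqcup_{n\in N}D_n$. By $\sigma$-additivity $0<\mu(F)\leq 1$, so $F\in\mathcal{A}_>$.

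Hemifiniteness now supplies an $m$ with $\mu(F\setminus A_m)=0$. Since $N$ is infinite, pick any $n\in N$ with $n>m$; then by construction $D_n\subseteq B_n\subseteq X\setminus\bigcup_{k<n}A_k\subseteq X\setminus A_m$, so $D_n\subseteq F\setminus A_m$, forcing $\mu(D_n)=0$ and contradicting $\mu(D_n)>0$. This contradiction proves $\mu(X)<\infty$.

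The only step that requires any thought is the extraction of small-positive-measure subsets, which is where the nonatomic hypothesis enters; the remainder is bookkeeping on the disjointified sequence $\{B_n\}$ and a direct application of the defining property of hemifiniteness. I do not anticipate a genuine obstacle once that classical subset-selection fact is taken for granted.
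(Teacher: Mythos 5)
Your proof is correct and follows essentially the same route as the paper's: disjointify the hemifinite sequence, use nonatomicity to extract subsets of measure less than $2^{-n}$ from infinitely many of the positive-measure differences, and observe that their union lies in $\mathcal{A}_>$ yet cannot be almost contained in any $A_m$. Your bookkeeping with the index set $N$ is in fact slightly cleaner than the paper's ``without loss of generality'' re-indexing, but the idea is identical.
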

\begin{proof}
	If possible, let $\mu(X)=\infty$. Since $\mu$ is {hemifinite}, there exists an increasing sequence $A_1\subseteq A_2\subseteq \cdots$ of members of $\mathcal{A}_>$ with the following property: given $G\in \mathcal{A}_>$, there exists $n\in \mathbb{N}$ such that $\mu(G\setminus A_n)=0$. Since $\mu(X\setminus \bigcup\limits_{n=1}^\infty A_n )=0$, without loss of generality, we can assume that $X=\bigcup\limits_{n=1}^\infty A_n$ and consequently, $X=\bigcup\limits_{n=1}^\infty(A_{n+1}\setminus A_n)$. But since, $\mu(X)=\infty$, this forces that $\mu(A_{n+1}\setminus A_n)>0$ for infinitely many n's and without loss of generality, we can assume that $\mu(A_{n+1}\setminus A_n)>0 $ for each $n\in \mathbb{N}$. Consider any $n\in \mathbb{N}$, then since $\mu$ is a {nonatomic measure}, we can write $A_{n+1}\setminus A_n=\bigcup\limits_{k=1}^\infty B_k$, where $\{B_k\}_k$ is a pairwise disjoint sequence of members of $\mathcal{A}_>$. By virtue of the countable additivity of the measure $\mu$, we can write $\mu(A_{n+1}\setminus A_n)=\sum\limits_{n=1}^\infty\mu(B_k)<\infty$ and hence $\lim\limits_{k\rightarrow\infty}\mu(B_k)=0$. Thus we can produce a $B_{k_n}$ for which $0<\mu(B_{k_n})<\frac{1}{2^n} $ and this we can do for each $n\in \mathbb{N}$. Let $B=\bigcup\limits_{n=1}^\infty B_{k_n}$. Then $\mu(B)=\sum\limits_{n=1}^\infty \mu(B_{k_n})\leq\sum\limits_{n=1}^\infty\frac{1}{2^n}=1$. So $B\in \mathcal{A}_>$. Therefore, there exists $n\in \mathbb{N}$ such that $\mu(B\setminus A_n)=0$. This implies that $\mu(B_{k_n}\setminus A_n)=0$. Since $B_{k_n}\subseteq A_{n+1}\setminus A_n$, as constructed in the earlier step, this further implies that $\mu(B_{k_n})=0$- a contradiction to the choice $\mu(B_{k_n})>0$ for each $n\in \mathbb{N}$. This theorem is completely proved.
\end{proof}
\begin{remark}
	The conclusion of the above theorem may not be true for an {atomic measure}.
\end{remark}
\begin{example}[Example of an infinite {hemifinite} {atomic measure} space]
Let $X=\mathbb{N}$, $\mathcal{A}=\mathcal{P}(\mathbb{N})$ and $\mu:\mathcal{A}\rightarrow[0,\infty]$ be the counting measure defined in the usual manner. Then $\mu(\mathbb{N})=\infty$ and $\mathcal{B}=\{A\subseteq \mathbb{N}:\emptyset\neq A\text{ is a finite set}\}$ is a countable family of $\mu$-measurable sets with strictly positive finite $\mu$-measure. Furthermore, if $G\in \mathcal{A}$ such that $0<\mu(G)<\infty$, then $G=A$ for some $A\in \mathcal{B}$. This proves that $\mu$ is a {hemifinite} infinite measure. It is easy to check that $\mu$ is an {atomic measure}, because each $\{n\}$, $n\in \mathbb{N}$ is an atom- indeed $\mu$ is a purely {atomic measure}.
\end{example}
The following theorem contains several equivalent versions of first countability of the topological space $\mathcal{M}_F(X,\mathcal{A},\mu)$. For that purpose we need the following definitions.  A subset S of a space X is said to have{ countable character} if there exists a
sequence $\{W_n:n\in \mathbb{N}\}$ of open subsets in $X$ such that $S \subseteq W_n$ for all $n$ and if $W$ is
any open set containing $S$, then $W_n \subseteq W$ for some $n\in \mathbb{N}$ \cite{12}. A topological space $X$ is called { pointwise countable type }if each point is contained in a compact set with { countable character }\cite{12}. We also consider the more general property of being a { q-space}. This is a space such 
that for each point $a\in X$ there exists a sequence $\{U_n: n\in \mathbb{N}\}$ of neighborhoods of $a$ so that if $x_n\in U_n$, for each $n$ then $\{x_n: n\in \mathbb{N}\}$ has a cluster point in $X$ \cite{12}.
\begin{theorem}\label{firstcounttable}
	The following statements are equivalent for a measure space $(X,\mathcal{A},\mu)$:
	\begin{enumerate}
		\item $(X,\mathcal{A},\mu)$ is a {hemifinite} measure space.
		\item $\mathcal{M}_F(X,\mathcal{A},\mu)$ is {first countable}.
		\item $\mathcal{M}_F(X,\mathcal{A},\mu)$ is of {pointwise countable type}.
		\item $\mathcal{M}_F(X,\mathcal{A},\mu)$ is a {q-space}.
	\end{enumerate}
\end{theorem}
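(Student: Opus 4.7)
I would establish the cycle $(1) \Rightarrow (2) \Rightarrow (3) \Rightarrow (4) \Rightarrow (1)$. Of these, $(1) \Rightarrow (2)$ is a routine absorption-of-null-sets calculation, while $(2) \Rightarrow (3) \Rightarrow (4)$ are purely topological and essentially formal. The bulk of the work lies in $(4) \Rightarrow (1)$, which I expect to be the main obstacle, since one must manufacture a hemifinite witness out of an abstractly given q-sequence.

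For $(1) \Rightarrow (2)$, fix a sequence $\{A_n\}$ witnessing hemifiniteness. At any $f \in \mathcal{M}_F(X,\mathcal{A},\mu)$, the countable family $\{B(f, A_n, 1/m) : n, m \in \mathbb{N}\}$ forms a local base: given a basic neighborhood $B(f, F, \epsilon)$ with $F \in \mathcal{A}_>$, pick $n$ so that $\mu(F \setminus A_n) = 0$ and $m$ with $1/m < \epsilon$; any $g \in B(f, A_n, 1/m)$ then lies in $B(f, F, \epsilon)$ by enlarging the associated null set $A_g$ to $A_g \cup (F \setminus A_n)$, which remains null. The implication $(2) \Rightarrow (3)$ is immediate, since a singleton $\{f\}$ is compact and a countable local base at $f$ doubles as a countable outer base. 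For $(3) \Rightarrow (4)$, given $f \in K$ compact with countable decreasing outer base $\{W_n\}$, the sequence $\{W_n\}$ itself serves as a q-sequence at $f$: if $x_n \in W_n$ had no cluster point, each point of $K$ would admit an open neighborhood meeting $\{x_n\}$ in a finite set, and a finite subcover of $K$ would yield an open $W \supseteq K$ meeting $\{x_n\}$ in a finite set, contradicting $W_N \subseteq W$ for large $N$.

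For the crux $(4) \Rightarrow (1)$, suppose $\mathcal{M}_F(X,\mathcal{A},\mu)$ is a q-space and fix a q-sequence $\{U_n\}$ at $\underline{0}$. Replacing each $U_n$ by a basic open subset $B(\underline{0}, A_n, \epsilon_n)$ preserves the q-property (any smaller sequence of neighborhoods inherits it), and further passing to $A_n := A_1 \cup \cdots \cup A_n$ and $\epsilon_n := \min\{\epsilon_1, \ldots, \epsilon_n\}$ yields a q-sequence with $\{A_n\} \subseteq \mathcal{A}_>$ increasing and $\{\epsilon_n\}$ decreasing. I claim this $\{A_n\}$ witnesses hemifiniteness. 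If not, there exists $F \in \mathcal{A}_>$ with $\mu(B_n) > 0$ for every $n$, where $B_n := F \setminus A_n$ is decreasing. Define $f_n := n \cdot \chi_{B_n}$; since $f_n \equiv 0$ on $A_n$, we have $f_n \in U_n$, so the q-property produces a cluster point $g \in \mathcal{M}(X,\mathcal{A},\mu)$. The neighborhood $B(g, F, 1/2)$ then catches $f_n$ for indices $n$ in some infinite $N \subseteq \mathbb{N}$, each contributing a null set $E_n \subseteq F$ on whose complement $|f_n - g| < 1/2$. Pick $n_0 < n_1$ in $N$ and, using $\mu(B_{n_1}) > 0$, a point $x \in B_{n_1} \setminus (E_{n_0} \cup E_{n_1})$. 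Since $B_{n_1} \subseteq B_{n_0}$, both $f_{n_0}(x) = n_0$ and $f_{n_1}(x) = n_1$ hold, forcing $g(x)$ to lie within $1/2$ of two distinct integers --- an absurdity that proves the claim and closes the cycle.
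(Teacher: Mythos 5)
Your proof is correct and follows essentially the same route as the paper's: the same countable base $\{B(f,A_n,1/m)\}$ for $(1)\Rightarrow(2)$, and for $(4)\Rightarrow(1)$ the same unbounded sequence $n\chi_{F\setminus A_n}$ placed in the shrinking $q$-neighborhoods, whose cluster point yields a contradiction on a positive-measure set (your version with radius $\tfrac12$ and two arbitrary indices is in fact slightly cleaner than the paper's, which uses radius $1$ and indices $k+2<m$). The only difference is that the paper cites $(2)\Rightarrow(3)\Rightarrow(4)$ as standard facts while you prove them explicitly; the substance is identical.
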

\begin{proof}
	Since a {first countable }space is of {pointwise countable type} and a space of {pointwise countable type} is a {q-space}, we need to prove only $(1)\implies(2)$ and $(4)\implies(1)$.
	
	$(1)\implies(2)$: The assumption (1) tells us that there is a sequence $\{A_n\}_n$ of members of $\mathcal{A}_>$ with the following property: for any $G\in \mathcal{A}_>$, $\mu(G\setminus A_n)=0$ for some $n\in \mathbb{N}$. Let $\mathcal{B}=\{B(\underline{0}, A_n,\frac{1}{m}:m,n\in \mathbb{N})\}$. Then $\mathcal{B}$ is a countable family of open neighborhoods of $\underline{0}$ in $\mathcal{M}_F(X,\mathcal{A},\mu)$. We claim that $\mathcal{B}$ is an open base about $\underline{0}$ in the same space and hence, $\mathcal{M}_F(X,\mathcal{A},\mu)$ is {first countable}. Proof of the claim: Let $G\in \mathcal{A}_>$, then $\mu(G\setminus A_n)=0$ for some $n\in \mathbb{N}$, it follows that $B(\underline{0},A_n,\frac{1}{m})\subseteq B(\underline{0},G,\frac{1}{m})$ for any $m\in \mathbb{N}$.
	
	$(4)\implies (1)$: Since $(4)$ is true, there exists a sequence of basic open sets $\{B(\underline{0},A_n,\epsilon_n):n\in \mathbb{N}\}$ about the point $\underline{0}$ in $\mathcal{M}_F(X,\mathcal{A},\mu)$ such that if $f_n\in B(\underline{0},A_n,\epsilon_n)$, $n\in \mathbb{N}$, then $\{f_n:n\in \mathbb{N}\}$ has a cluster point in this space. For each $n\in \mathbb{N}$, let $B_n=A_1\cup A_2\cup\cdots\cup A_n$. Then $\{B_n:n\in \mathbb{N}\}$ is an increasing sequence of sets in $\mathcal{A}_>$. Now choose $G\in \mathcal{A}_>$. We claim that there exists $n\in \mathbb{N}$ for which $\mu(G\setminus B_n)=0$ and hence $\mu$ becomes a {hemifinite measure} on $(X,\mathcal{A})$. If possible, let for each $n\in \mathbb{N}$, $\mu(G\setminus B_n)>0$. Let $g_n=n\chi_{G\setminus B_n}$. Then $g_n\in B(\underline{0},B_n,\frac{\epsilon_n}{2})\subseteq B(\underline{0}, A_n,\epsilon_n)$. From the assumption made above the sequence $\{g_n\}_n$ has a cluster point $g$ in the space $\mathcal{M}_F(X,\mathcal{A},\mu)$. So the open set $B(g,G,1)$ contains infinitely many $g_k$'s. Choose one such $g_k$ in this open set. Then $|g_k(x)-g(x)|<1$ almost everywhere($\mu$) on the set $G$ and hence $|k-g(x)|<1$ almost everywhere($\mu$) on the set $G\setminus B_k$ i.e., $k-1<g(x)<k+1$ for all $x\in (G\setminus B_k)\setminus A$ for some $A\in \mathcal{A}$ with $\mu(A)=0$$\cdots$ (i). We repeat this argument to find out an $m\in \mathbb{N}$ with $k+2<m$ such that $g_m\in B(g,G,1)$ and therefore, there exists $B\in \mathcal{A}$ with $\mu(B)=0$ for which we can write $m-1<g(x)<m+1$ for all $x\in (G\setminus B_m)\setminus B$ $\cdots$(ii). Since $\mu(A\cup B)=0$ and $\mu(G\setminus B_m)>0$ we can choose a point $c\in (G\setminus B_m)\setminus (A\cup B)$, for which we can write in view of the inequalities (i) and (ii): $k-1<g(c)<k+1<m-1<g(c)<m+1$, a contradiction (we have used the fact $k+2<m\implies B_k\subseteq B_m$, therefore $c\notin B_m\implies c\notin B_k$). The theorem in completely proved. 
\end{proof}
 We recall that a topological space is called an {$E_0$-space}(or a space of {countable pseudo character}) if each one-pointic set is a $G_\delta$-subset and every first countable $T_1$ topological space is {$E_0$}. We shall complete this section after giving a characterization of a prototype of $E_0$-\text{spaces} in the measure theoretic setting.
 \begin{definition}
 	For any $f\in \mathcal{M}(X,\mathcal{A},\mu) $, let $$[f]=\{g\in \mathcal{M}(X,\mathcal{A},\mu):f=g \text{ almost everywhere}(\mu)\text{ on }X\}$$ We call $\mathcal{M}_F(X,\mathcal{A},\mu)$ is a {$E_0^\mu$-space} if for each $f\in \mathcal{M}(X,\mathcal{A},\mu)$, $[f]$ is a $G_\delta$-set in $\mathcal{M}_F(X,\mathcal{A},\mu)$. It is easy to see that if $\mathcal{A}=\mathcal{P}(X)$ and $\mu$ is the counting measure on $\mathcal{A}$, then $\mathcal{M}_F(X,\mathcal{A},\mu)$ becomes an {$E_o^\mu$-space }if and only if it is an {$E_0$-space}.
 \end{definition}
\begin{theorem}\label{sigma}
	For a measure space $(X,\mathcal{A},\mu)$, $\mathcal{M}_F(X,\mathcal{A},\mu)$ is an {$E_0^\mu$-space} if and only if $X$ is a {$\sigma$-finite} measure space.
\end{theorem}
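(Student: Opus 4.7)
The plan is to exploit the fact that $\mathcal{M}_F(X,\mathcal{A},\mu)$ is a homogeneous topological group, so the property ``$[f]$ is $G_\delta$'' need only be checked at $f=\underline{0}$; the coset $[\underline{0}]$ is precisely $\{g\in\mathcal{M}(X,\mathcal{A},\mu):g=0\text{ a.e.}\}$. Thus the theorem reduces to: $[\underline{0}]$ is a $G_\delta$-set iff $(X,\mathcal{A},\mu)$ is $\sigma$-finite.

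For the forward implication ($\sigma$-finiteness $\Rightarrow$ $E_0^\mu$), I would write $X=\bigcup_{n\in\mathbb{N}}A_n$ with $\mu(A_n)<\infty$. Discarding (or merging with another set) those $A_n$ of measure zero, I may assume $A_n\in\mathcal{A}_>$. I claim
\[
[\underline{0}]=\bigcap_{n,m\in\mathbb{N}}B\!\left(\underline{0},A_n,\tfrac{1}{m}\right),
\]
which is a countable intersection of open sets. The inclusion $\subseteq$ is immediate from the definition. For $\supseteq$, if $g$ lies in every $B(\underline{0},A_n,1/m)$, then for each pair $(n,m)$ there is a null set $N_{n,m}$ with $|g|<1/m$ on $A_n\setminus N_{n,m}$; letting $N_n=\bigcup_m N_{n,m}$, which is null, we get $g=0$ on $A_n\setminus N_n$, hence $g=0$ a.e.\ on $A_n$, and finally $g=0$ a.e.\ on $X=\bigcup A_n$.

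For the converse, suppose $[\underline{0}]=\bigcap_{n}U_n$ with each $U_n$ open. Since $\underline{0}\in U_n$, I can pick a basic neighborhood $B(\underline{0},A_n,\epsilon_n)\subseteq U_n$ with $A_n\in\mathcal{A}_>$ and $\epsilon_n>0$. Then $\bigcap_n B(\underline{0},A_n,\epsilon_n)\subseteq [\underline{0}]$. I claim $\mu(X\setminus\bigcup_n A_n)=0$, which (absorbing this null set into $A_1$) yields $\sigma$-finiteness. If instead $\mu(X\setminus\bigcup_n A_n)>0$, then by the standing hypothesis that every measurable set of infinite measure contains a set in $\mathcal{A}_>$, I can extract $F\in\mathcal{A}_>$ with $F\subseteq X\setminus\bigcup_n A_n$. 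The function $\chi_F$ vanishes on each $A_n$, so $\chi_F\in B(\underline{0},A_n,\epsilon_n)$ for every $n$, forcing $\chi_F\in[\underline{0}]$; but $\mu(F)>0$ contradicts $\chi_F=0$ a.e.

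The argument is mostly routine once one sees to test with $\chi_F$ on a positive-measure piece missed by the $A_n$'s; the only subtle point is remembering to use the standing hypothesis on sets of infinite measure to produce $F\in\mathcal{A}_>$ from a possibly infinite-measure leftover $X\setminus\bigcup_n A_n$, which is the main (and only real) obstacle.
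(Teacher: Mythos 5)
Your proposal is correct and follows essentially the same route as the paper: both directions hinge on representing $[\,\underline{0}\,]$ (equivalently, by translation, any $[f]$) as a countable intersection of basic balls $B(\underline{0},A_n,\cdot)$ over a $\sigma$-finite exhaustion, and on testing the converse with a characteristic function supported on the positive-measure leftover $X\setminus\bigcup_n A_n$. The only (cosmetic) differences are that you use the double intersection $\bigcap_{n,m}B(\underline{0},A_n,\frac{1}{m})$ where the paper diagonalizes to $\bigcap_n B(f,A_n,\frac{1}{n})$ along an increasing exhaustion, and that your extraction of $F\in\mathcal{A}_>$ via the standing hypothesis is not actually needed, since $\chi_{X\setminus\bigcup_n A_n}$ itself already furnishes the contradiction as in the paper.
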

\begin{proof}
First assume that $\mathcal{M}_F(X,\mathcal{A},\mu)$ is an {$E_0^\mu$-space}. Then we can write $[\underline{0}]=\bigcap\limits_{n=1}^\infty B(\underline{0},A_n,\epsilon_n)$ for a countable subfamily $\{A_n\}_n$ of $\mathcal{A}_>$ and for a suitable $\epsilon_n>0$ for each $n\in \mathbb{N}$. We assert that $\mu(X\setminus \bigcup\limits_{n=1}^\infty A_n)=0$ and hence $\mu$ becomes a {$\sigma$-finite} measure on $\mathcal{A}$ because $0<\mu(A_n)<\infty$ for each $n\in \mathbb{N}$. Towards the proof of the last assertion, we argue by contradiction and assume that $\mu(X\setminus \bigcup\limits_{n=1}^\infty A_n)>0$. Then $\chi_{X\setminus\bigcup\limits_{n=1}^\infty A_n}\in \bigcap\limits_{n=1}^\infty B(\underline{0},A_n,\epsilon_n)$. But it is clear that $\chi_{X\setminus\bigcup\limits_{n=1}^\infty A_n}\notin[\underline{0}]$. We arrived at a contradiction.

Conversely, let $\mu$ be a {$\sigma$-finite} measure. Then we can write $X=\bigcup\limits_{n=1}^\infty A_n$ where $A_1\subseteq A_2\subseteq\cdots$ is an increasing sequence of measurable sets with $\mu(A_n)<\infty$ for each $n\in \mathbb{N}$. Choose, $f\in \mathcal{M}(X,\mathcal{A},\mu)$. It is sufficient to show that $[f]=\bigcap\limits_{n=1}^\infty B(f,A_n,\frac{1}{n})$. So choose $g\in \bigcap\limits_{n=1}^\infty B(f,A_n,\frac{1}{n})$. Then for each $n\in \mathbb{N}$, there exists $n\in \mathcal{A}$ with $\mu(B_n)=0$ such that $|g(x)-f(x)|<\frac{1}{n}$ for each $x\in A_n\setminus B_n$. On writing $B=\bigcup\limits_{n=1}^\infty B_n$, we get that $\mu(B)=0 $ and $|g(x)-f(x)|<\frac{1}{n}$ for all $x\in A_n\setminus B$$\cdots$(1). Now choose, $x\in X\setminus B$. Then there exists $n\in \mathbb{N}$ such that $x\in A_n\setminus B$ and consequently $x\in A_{n+k}\setminus B$ for each $k\in \mathbb{N}$, because $A_n\subseteq A_{n+1}\subseteq\cdots,$ this implies in view of the last inequality (1) that $|g(x)-f(x)|<\frac{1}{n+k}$ for all $k\in \mathbb{N}$, consequently $g(x)=f(x)$. Since $\mu(B)=0$, this implies $g=f$ almost everywhere($\mu$) on $X$, in other words $g\in [f]$.\end{proof}
\begin{corollary}
	If $ \mathcal{M}_F(X,\mathcal{A},\mu)$ is {first countable}, then it is an {$E_0^\mu$-space}. This follows from Theorem \ref{firstcounttable}, Theorem \ref{sigma} and the fact that a {hemifinite} measure $\mu$ is {$\sigma$-finite}.
\end{corollary}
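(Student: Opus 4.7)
The plan is to argue by a short three-link chain, invoking the two named theorems of the section plus an observation already embedded in the very definition of hemifiniteness. I would start from the hypothesis that $\mathcal{M}_F(X,\mathcal{A},\mu)$ is first countable, convert this into a purely measure-theoretic statement about $\mu$, and then feed that statement back into a topological conclusion about $\mathcal{M}_F(X,\mathcal{A},\mu)$.

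Concretely, the first step is to apply Theorem \ref{firstcounttable}, specifically the implication $(2)\Rightarrow(1)$, to transfer first countability of $\mathcal{M}_F(X,\mathcal{A},\mu)$ into the assertion that $\mu$ is a hemifinite measure on $(X,\mathcal{A})$. The second step is to recall the observation recorded right after Definition 4.1: under our blanket assumption that every set of infinite $\mu$-measure contains a set of strictly positive finite $\mu$-measure, every hemifinite measure is automatically $\sigma$-finite, since a countable family $\{A_n\}$ witnessing hemifiniteness must satisfy $\mu(X\setminus \bigcup_n A_n)=0$ (otherwise that complement would house a finite-measure set that could escape every $A_n$). The third and final step is to invoke Theorem \ref{sigma}, which asserts precisely that $\sigma$-finiteness of $(X,\mathcal{A},\mu)$ forces $\mathcal{M}_F(X,\mathcal{A},\mu)$ to be an $E_0^\mu$-space, yielding the conclusion.

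Because each link is either a theorem already proved in the excerpt or a direct consequence of the standing hypothesis on $(X,\mathcal{A},\mu)$, there is no genuinely hard step to anticipate. The only point requiring a moment of care is the middle link: one should not appeal to the hemifinite $\Rightarrow$ $\sigma$-finite passage without mentioning that it depends on the blanket assumption of the introduction. With that caveat made explicit, the proof is simply the composition of the three implications and can be written in one or two sentences, matching the one-line justification already provided in the corollary's statement.
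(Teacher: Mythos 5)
Your proposal is correct and follows exactly the paper's intended chain: first countability yields hemifiniteness via Theorem \ref{firstcounttable}, hemifiniteness yields $\sigma$-finiteness by the observation following Definition 4.1 (which, as you rightly note, rests on the blanket assumption from the introduction), and $\sigma$-finiteness yields the $E_0^\mu$-property by the converse direction of Theorem \ref{sigma}. No discrepancy with the paper's one-line justification.
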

\section{ Cardinal functions of the space $\mathcal{M}_F(X,\mathcal{A},\mu)$}
We recall the definition of a number of standard cardinal functions, to be needed in the present article.
 For any space \( X \) and any point \( x \) in \( X \), the {character} of \( x \) in \( X \), denoted by 
$
\chi(X,x),
$
is defined by
\[
\chi(X,x) = \aleph_0 + \min\{|\mathscr{B}_x| : \mathscr{B}_x \text{ is an open base for } X \text{ at } x\}.
\]

The {character} \( \chi(X) \) of \( X \) is defined by
$
\chi(X) = \sup\{\chi(X,x) : x \in X\}
$. Clearly a space \( X \) is {first countable} if and only if \( \chi(X) = \aleph_0 \).

The {weight} of a space \( X \) is defined by
\[
w(X) = \aleph_0 + \min\{|\mathscr{B}| : \mathscr{B} \text{ is an open base for } X\}.
\]

A space \( X \) is {second countable} if  and only if \( w(X) = \aleph_0 \).

The {density} \( d(X) \) of a space \( X \) is defined by
\[
d(X) = \aleph_0 + \min\{|D| : D \text{ is a dense subset of } X\}.
\]

A space \( X \) is {separable} if and only if \( d(X) = \aleph_0 \).

The {Lindelöf number} \( L(X) \) of \( X \) is defined by
\[
L(X) = \aleph_0 + \min\{ \mathfrak{m} : \text{every open cover of } X \text{ has a subcover of cardinality } \leq \mathfrak{m} \}.
\]

A space is {Lindelöf} if and only if \( L(X) = \aleph_0 \).

For a space $X$, the cellularity of $X$, denoted by $c(X)$, is defined by $$c(X) =
\aleph_0 + \sup\{|\mathcal{U} | : \mathcal{U} \text{ is a family of pairwise disjoint nonempty open subsets of } X\}.$$ A
space $X$ has countable chain condition if and only if $c(X) =\aleph_0$. 

 Let $\mathcal{V}$ be a collection of non-empty open 
sets in $X$. Then $\mathcal{V}$ is a local $\pi$-base for $x$ if for each open neighborhood 
$R$ of $x$, one has $V\subseteq R$ for some $V\in \mathcal{V}$. If in addition one has $x\in V$ for all 
$V\in \mathcal{V}$, then $V$ is a local base for $x$. The {$\pi$-character }of $x$ in $X$ is denoted by $\pi\chi(x,X)$ and defined by 
$$\pi\chi(x, X)= \min\{|\mathcal{V}|: \mathcal{V}\text{ is a local }\pi\text{ base for }x\} ; $$
The {$\pi$-character } of $X$ is denoted by $\pi(X)$ and defined by $$\pi\chi(X)=\aleph_0+\sup\{\pi\chi(x,X):x\in X\}.$$
So, $X$ has a { countable $\pi$-character }if and only if $\pi\chi(X)=\aleph_0$.

The tightness of $x$ in $X$ is denoted by $t(x,X)$ and defined by $$t(x,X)=\min\{k:\text{ for all }Y\subseteq X\text{ with }x\in \cl Y,\text{ there is }A\subseteq Y\text{ with }|A|\leq k\text{ and }x\in \cl A\}$$ { Tightness }of $X$ is denoted by $t(X)$ and defined by $$t(X)=\aleph_0+\sup\{t(x,X):x\in X\}$$ A space $X$ is {countably tight }if and only if $t(X)=\aleph_0$.

Additionally we introduce a new cardinal number in this section.\begin{definition}
	A subset $\mathcal{B}$ of $\mathcal{A}_> $ is said to be {controlling} the finite measure if for any $A\in  \mathcal{A}_>,$ there is a $B\in \mathcal{B}$ such that $\mu(A\setminus B)=0$. We now define the {controlling number} $cn(\mathcal{A})$ of the $\sigma$-algebra $\mathcal{A}$ on $X$ by the following formula:
	$$cn(\mathcal{A})=\aleph_0+\min\{|\mathcal{B}|:\mathcal{B}\subseteq \mathcal{A}_>\text{ and }\mathcal{B} \text{ controls the finite measure}\}$$
\end{definition} 

It is clear that $cn(\mathcal{A})=\aleph_0$ if and only if the measure $\mu$ is {hemifinite}. We establish the following four results interlinking the above mentioned cardinal functions for the space $\mathcal{M}_F(X,\mathcal{A},\mu)$. 
\begin{theorem}\label{character}
	For a measure space $(X,\mathcal{A},\mu)$, $$t(\mathcal{M}_F(X,\mathcal{A},\mu))=\pi\chi(\mathcal{M}_F(X,\mathcal{A},\mu))=\chi(\mathcal{M}_F(X,\mathcal{A},\mu))=cn(\mathcal{A}).$$ 
\end{theorem}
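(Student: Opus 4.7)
The plan is to combine the standard general inequalities $t(X)\le\chi(X)$ and $\pi\chi(X)\le\chi(X)$ with the three non-trivial bounds
$$\chi(\mathcal{M}_F)\le cn(\mathcal{A}),\qquad cn(\mathcal{A})\le t(\mathcal{M}_F),\qquad cn(\mathcal{A})\le\pi\chi(\mathcal{M}_F),$$
which together force all four cardinal functions to coincide. Since $\mathcal{M}_F(X,\mathcal{A},\mu)$ is a homogeneous topological group, I will evaluate $\chi$ and $\pi\chi$ at the point $\underline{0}$ without loss of generality.

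For $\chi(\mathcal{M}_F)\le cn(\mathcal{A})$, I will start from a controlling family $\mathcal{B}\subseteq\mathcal{A}_>$ of size $cn(\mathcal{A})$ and verify that $\{B(\underline{0},B,1/n):B\in\mathcal{B},\,n\in\mathbb{N}\}$ is a local base at $\underline{0}$: given any $B(\underline{0},F,\epsilon)$, pick $B\in\mathcal{B}$ with $\mu(F\setminus B)=0$ and $n$ with $1/n\le\epsilon$; the inclusion $B(\underline{0},B,1/n)\subseteq B(\underline{0},F,\epsilon)$ is then immediate because ``$|g|<1/n$ almost everywhere on $B$'' implies ``$|g|<\epsilon$ almost everywhere on $F$'' (the symmetric difference $F\setminus B$ has $\mu$-measure zero).

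For $cn(\mathcal{A})\le t(\mathcal{M}_F)$ I will use the set $Y=\{\chi_{X\setminus A}:A\in\mathcal{A}_>\}$. Each $\chi_{X\setminus F}$ vanishes identically on $F$, so $\chi_{X\setminus F}\in Y\cap B(\underline{0},F,\epsilon)$ for every $F\in\mathcal{A}_>$ and every $\epsilon>0$; hence $\underline{0}\in\cl Y$. Tightness then supplies $Y'\subseteq Y$ with $|Y'|\le t(\mathcal{M}_F)$ and $\underline{0}\in\cl Y'$. Choosing, for each $\phi\in Y'$, an $A_\phi\in\mathcal{A}_>$ with $\phi=\chi_{X\setminus A_\phi}$, and setting $\mathcal{B}=\{A_\phi:\phi\in Y'\}$, I will apply $\underline{0}\in\cl Y'$ to the neighbourhood $B(\underline{0},F,\tfrac12)$ of any prescribed $F\in\mathcal{A}_>$ to obtain $A\in\mathcal{B}$ with $\chi_{X\setminus A}<\tfrac12$ almost everywhere on $F$, i.e., $\mu(F\setminus A)=0$; so $\mathcal{B}$ is controlling and $cn(\mathcal{A})\le|\mathcal{B}|\le t(\mathcal{M}_F)$.

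The main obstacle is $cn(\mathcal{A})\le\pi\chi(\mathcal{M}_F)$. Starting from a local $\pi$-base $\mathcal{V}$ at $\underline{0}$, I will pick some $f_V\in V$ and a basic open set $B(f_V,A_V,\epsilon_V)\subseteq V$ for each $V\in\mathcal{V}$; the shrunken family is still a local $\pi$-base at $\underline{0}$ of cardinality $\le|\mathcal{V}|$. I will then prove that $\{A_V:V\in\mathcal{V}\}$ controls the finite measure: fix $A\in\mathcal{A}_>$, choose $V\in\mathcal{V}$ with $B(f_V,A_V,\epsilon_V)\subseteq B(\underline{0},A,1)$, and note that $|f_V|<1$ almost everywhere on $A$ since $f_V$ itself lies in $B(\underline{0},A,1)$. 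If $\mu(A\setminus A_V)>0$, the measurable function $g=f_V\chi_{A_V}+\chi_{A\setminus A_V}$ agrees with $f_V$ on $A_V$ and so belongs to $B(f_V,A_V,\epsilon_V)$, while $g\equiv 1$ on the positive-measure set $A\setminus A_V$ prevents $g$ from lying in $B(\underline{0},A,1)$, a contradiction; hence $\mu(A\setminus A_V)=0$. The subtle point here is that the witness $g$ must simultaneously lie in the chosen basic $\pi$-base element and fail to lie in $B(\underline{0},A,1)$, and it is exactly the bound $|f_V|<1$ on $A$ that allows the value $1$ on $A\setminus A_V$ to generate the contradiction.
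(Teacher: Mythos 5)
Your proof is correct, and two of your three key bounds ($\chi(\mathcal{M}_F)\le cn(\mathcal{A})$ via the local base $\{B(\underline{0},B,1/n)\}$, and $cn(\mathcal{A})\le t(\mathcal{M}_F)$ via the set $\{\chi_{X\setminus A}:A\in\mathcal{A}_>\}$) are essentially the same as the paper's. Where you genuinely diverge is in handling the $\pi$-character: the paper simply invokes the general fact that $\pi\chi(G)=\chi(G)$ for every topological group and is then done after establishing $cn\le t\le\chi\le cn$, whereas you prove the extra inequality $cn(\mathcal{A})\le\pi\chi(\mathcal{M}_F)$ by hand --- shrinking a local $\pi$-base at $\underline{0}$ to basic sets $B(f_V,A_V,\epsilon_V)$, picking $V$ with $B(f_V,A_V,\epsilon_V)\subseteq B(\underline{0},A,1)$, and using the witness $g=f_V\chi_{A_V}+\chi_{A\setminus A_V}$ to force $\mu(A\setminus A_V)=0$. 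That argument is sound (note only that each witness $A_V$ automatically lies in $\mathcal{A}_>$ since $\mu(A_V)\ge\mu(A\cap A_V)=\mu(A)>0$, so the family you extract really is a subset of $\mathcal{A}_>$ as the definition of controlling requires), and it buys you a self-contained, elementary proof that does not lean on the topological-group theorem; the paper's route is shorter but outsources that step to a citation. One cosmetic slip: in the $\chi\le cn$ step you call $F\setminus B$ the ``symmetric difference''; it is just the set difference, but it is the correct null set for the inclusion $B(\underline{0},B,1/n)\subseteq B(\underline{0},F,\epsilon)$, so nothing is affected.
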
\begin{proof}
Like any topological group, $\pi\chi(\mathcal{M}_F(X,\mathcal{A},\mu))=\chi(\mathcal{M}_F(X,\mathcal{A},\mu))$\cite{9}. Also in any topological space $Y$, $t(Y)\leq \chi(Y)$ \cite[Theorem 2.1]{j}. Therefore, it is sufficient to prove that: $\chi(\mathcal{M}_F(X,\mathcal{A},\mu))\leq cn(\mathcal{A})$ and $cn(\mathcal{A})\leq t(\mathcal{M}_F(X,\mathcal{A},\mu))$.

\underline{Proof of $\chi(\mathcal{M}_F(X,\mathcal{A},\mu))\leq cn(\mathcal{A}$)}: Let $\mathcal{F}\subseteq \mathcal{A}_>$ be a controlling set, controlling the finite measure. It is sufficient to show that $\{B(\underline{0},F,\frac{1}{n}):F\in \mathcal{F}, n\in \mathbb{N}\}$ is a local base at $\underline{0}$ in $\mathcal{M}_F(X,\mathcal{A},\mu)$. Let $G\in \mathcal{A}_>$ and $n\in \mathbb{N}$ be arbitrary. Then there exists $F\in \mathcal{A}_>$ such that $\mu(G\setminus F)=0$. Hence, $B(\underline{0},F,\frac{1}{n})\subseteq B(\underline{0},G,\frac{1}{n})$.

\underline{Proof of $cn(\mathcal{A})\leq t(\mathcal{M}_F(X,\mathcal{A},\mu))$}: Let $L=\{\chi_{X\setminus A}:A\in \mathcal{A}_>\}$. Then we assert that $\underline{0}\in cl L$ in the space $\mathcal{M}_F(X,\mathcal{A},\mu)$ because for $G\in \mathcal{A}_>$,$\epsilon>0$, $B(\underline{0},G,\epsilon)\cap L$ contains the function $\chi_{X\setminus G}$. Let $L'$ be a subset of $L$ with the smallest cardinal no with the property that $\underline{0}\in \cl L'$. Then $|L'|\leq t(\underline{0},\mathcal{M}_F(X,\mathcal{A},\mu))=$ the tightness of $\underline{0}$ in the space $\mathcal{M}_F(X,\mathcal{A},\mu)=t(\mathcal{M}_F(X,\mathcal{A},\mu))$ because, $\mathcal{M}_F(X,\mathcal{A},\mu)$ is a homogeneous space. It suffices to show that $cn(\mathcal{A})\leq t(\mathcal{M}_F(X,\mathcal{A},\mu))$. Towards that end we set $\mathcal{F}=\{A:\chi_{X\setminus A}\in L'\}$. Then $|\mathcal{F}|\leq |L'|$. We shall show that $\mathcal{F}$ controls the finite measure. Indeed we choose $A\in \mathcal{A}_>$. Then $B(\underline{0},A,\frac{1}{2})\cap L'\neq\emptyset$ as $\underline{0}\in \cl L'$. We choose, $\chi_{X\setminus B}\in L'\cap B(\underline{0},A,\frac{1}{2})$ for some $B\in \mathcal{A}_>$. Then $\chi_{X\setminus B}(x)<\frac{1}{2}  $ for all $x\in A$ almost everywhere($\mu$) on it $\implies$ $\chi_{X\setminus B}(x)=0$ almost everywhere($\mu$) on $\mathcal{A}\implies$ $\mu(A\setminus B)=0$ and we note that $B\in \mathcal{F}$.

\end{proof}
\begin{corollary}
The following statements are equivalent for a measure space $(X,\mathcal{A},\mu)$:
	\begin{enumerate}

		\item $\mathcal{M}_F(X,\mathcal{A},\mu)$ is {first countable}.
		\item $\mathcal{M}_F(X,\mathcal{A},\mu)$ is a {Fre\'chet space}.
		\item $\mathcal{M}_F(X,\mathcal{A},\mu)$ is a {sequential space}.
		\item $\mathcal{M}_F(X,\mathcal{A},\mu)$ is {countably tight}.
		\item $(X,\mathcal{A},\mu)$ is a {hemifinite measure space}.
		\item $\mathcal{M}_F(X,\mathcal{A},\mu)$ is of {pointwise countable type}.
	\end{enumerate}
\end{corollary}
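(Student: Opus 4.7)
The plan is to organize the equivalences as the cycle $(1)\Rightarrow(2)\Rightarrow(3)\Rightarrow(4)\Rightarrow(1)$, and then close the loop to $(5)$ and $(6)$ by invoking Theorem \ref{firstcounttable}. The three implications $(1)\Rightarrow(2)\Rightarrow(3)\Rightarrow(4)$ are textbook facts in general topology: every first countable space is a Fr\'echet--Urysohn space, every Fr\'echet--Urysohn space is sequential, and every sequential space is countably tight. So the only nontrivial link in the cycle is $(4)\Rightarrow(1)$.

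For $(4)\Rightarrow(1)$, I would use Theorem \ref{character} directly. Assuming $\mathcal{M}_F(X,\mathcal{A},\mu)$ is countably tight means $t(\mathcal{M}_F(X,\mathcal{A},\mu))=\aleph_0$. But Theorem \ref{character} asserts
\[
t(\mathcal{M}_F(X,\mathcal{A},\mu))=\pi\chi(\mathcal{M}_F(X,\mathcal{A},\mu))=\chi(\mathcal{M}_F(X,\mathcal{A},\mu))=cn(\mathcal{A}),
\]
so $\chi(\mathcal{M}_F(X,\mathcal{A},\mu))=\aleph_0$ as well, which is exactly the statement that $\mathcal{M}_F(X,\mathcal{A},\mu)$ is first countable. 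This step is the heart of the proof and is essentially free once Theorem \ref{character} is available; it is the reason the four classical countability-type properties collapse in this setting.

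Finally, the equivalences $(1)\Leftrightarrow(5)\Leftrightarrow(6)$ are simply the already-established implications $(1)\Leftrightarrow(2)\Leftrightarrow(3)\Leftrightarrow(4)$ of Theorem \ref{firstcounttable} (our present $(1)$, $(5)$, $(6)$ correspond there to \emph{first countable}, \emph{hemifinite}, and \emph{pointwise countable type}). There is no obstacle here beyond citing that theorem. The only place where any genuine content is spent is the jump from countable tightness to first countability, and that content has already been deposited in Theorem \ref{character}; everything else is a bookkeeping assembly of well-known implications between sequential-type covering properties.
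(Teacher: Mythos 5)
Your proof is correct and is exactly the argument the paper intends: the corollary is stated without proof immediately after Theorem \ref{character}, and the intended justification is precisely the standard chain first countable $\Rightarrow$ Fr\'echet $\Rightarrow$ sequential $\Rightarrow$ countably tight, closed by $t(\mathcal{M}_F(X,\mathcal{A},\mu))=\chi(\mathcal{M}_F(X,\mathcal{A},\mu))$ from that theorem, together with Theorem \ref{firstcounttable} (or equivalently the observation that $cn(\mathcal{A})=\aleph_0$ iff $\mu$ is hemifinite) for items (5) and (6). Nothing is missing.
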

\begin{theorem}\label{5.3}
	For any measure space $(X,\mathcal{A},\mu)$, $$w(\mathcal{M}_F(X,\mathcal{A},\mu))=cn(\mathcal{A}) d(\mathcal{M}_F(X,\mathcal{A},\mu))$$.
\end{theorem}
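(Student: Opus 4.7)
The plan is to prove $w(\mathcal{M}_F(X,\mathcal{A},\mu)) = cn(\mathcal{A}) \cdot d(\mathcal{M}_F(X,\mathcal{A},\mu))$ by establishing the two inequalities separately and leveraging Theorem \ref{character}. The direction $cn(\mathcal{A}) \cdot d(\mathcal{M}_F(X,\mathcal{A},\mu)) \leq w(\mathcal{M}_F(X,\mathcal{A},\mu))$ is essentially a consequence of two general cardinal inequalities, $d(Y) \leq w(Y)$ and $\chi(Y) \leq w(Y)$, valid in every topological space $Y$. Combining the second with the identity $\chi(\mathcal{M}_F(X,\mathcal{A},\mu)) = cn(\mathcal{A})$ from Theorem \ref{character} gives $cn(\mathcal{A}) \leq w(\mathcal{M}_F(X,\mathcal{A},\mu))$, and the product is then bounded above using $\kappa \cdot \kappa = \kappa$ for infinite cardinals $\kappa$.

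For the harder direction $w(\mathcal{M}_F(X,\mathcal{A},\mu)) \leq cn(\mathcal{A}) \cdot d(\mathcal{M}_F(X,\mathcal{A},\mu))$, I would fix a controlling family $\mathcal{F} \subseteq \mathcal{A}_>$ with $|\mathcal{F}| \leq cn(\mathcal{A})$ and a dense set $D \subseteq \mathcal{M}_F(X,\mathcal{A},\mu)$ with $|D| \leq d(\mathcal{M}_F(X,\mathcal{A},\mu))$, and propose the candidate base
$$\mathcal{B} = \{B(f, F, \tfrac{1}{n}) : f \in D,\ F \in \mathcal{F},\ n \in \mathbb{N}\},$$
whose cardinality is at most $cn(\mathcal{A}) \cdot d(\mathcal{M}_F(X,\mathcal{A},\mu))$. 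To verify that $\mathcal{B}$ is a base, I would take any basic open neighbourhood $B(g, G, \epsilon)$ of any $g$, pick $n$ with $\tfrac{2}{n} < \epsilon$, and select $F \in \mathcal{F}$ with $\mu(G \setminus F) = 0$. Since $F \cup G \in \mathcal{A}_>$, the basic open set $B(g, F \cup G, \tfrac{1}{n})$ meets $D$ in some function $f$, and a short triangle-inequality check then yields $g \in B(f, F, \tfrac{1}{n}) \subseteq B(g, G, \epsilon)$.

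The crux of the argument, and the main obstacle, is the triangle-inequality step described above. A naive attempt to find $f$ in $D \cap B(g, G, \tfrac{1}{n})$ would fail to control $|h - f|$ on the part of $F$ lying outside $G$, so one has to enlarge $G$ to $F \cup G$ before invoking density. The second subtlety is the use of $\mu(G \setminus F) = 0$ to transfer an almost-everywhere estimate from $F$ back to $G$: this is precisely the content of the controlling property, and it is exactly what explains why $cn(\mathcal{A})$, rather than some other cardinal invariant of $\mathcal{A}$, is the correct factor to pair with the density.
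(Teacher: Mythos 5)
Your proposal is correct and follows essentially the same route as the paper: the lower bound via $d(Y)\le w(Y)$, $\chi(Y)\le w(Y)$ and Theorem \ref{character}, and the upper bound via the candidate base $\{B(f,F,\tfrac{1}{n}):f\in D,\ F\in\mathcal{F},\ n\in\mathbb{N}\}$ built from a controlling family and a dense set. The only cosmetic difference is that you invoke density on $B(g,F\cup G,\tfrac{1}{n})$ where the paper uses $B(g,F,\tfrac{1}{k})$ directly; both versions of the triangle-inequality step go through because $\mu(G\setminus F)=0$ transfers the estimate back to $G$.
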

	\begin{proof}
	For any topological space $Y$, $d(Y)\leq w(Y)$ and $\chi(Y)\leq w(Y)$ \cite[Theorem 2.1]{j}, therefore it follows from Theorem \ref{character} that $$cn(\mathcal{A})d(\mathcal{M}_F(X,\mathcal{A},\mu))\leq w(\mathcal{M}_F(X,\mathcal{A},\mu)).$$

	 We need to prove the reverse inequality. For that purpose let, $\mathcal{F}$ be a subfamily of $\mathcal{A}_>$, which controls the finite measure and with $|\mathcal{F}|\leq cn(\mathcal{A})$ and $D$ be a dense subset of $\mathcal{M}_F(X,\mathcal{A},\mu)$ such that $|D|=d(\mathcal{M}_F(X,\mathcal{A},\mu))$. Let $L=\{B(f,F,\frac{1}{n}):f\in D, F\in \mathcal{F},n\in \mathbb{N}\}$. It suffices to show that $L$ is an open base for the topology of $\mathcal{M}_F(X,\mathcal{A},\mu)$ and hence, $w(\mathcal{M}_F(X,\mathcal{A},\mu))\leq |\mathcal{F}||D|$. Towards that assertion that $L$ is an open base for $\mathcal{M}_F(X,\mathcal{A},\mu)$, let $g\in \mathcal{M}(X,\mathcal{A},\mu)$, $H\in \mathcal{A}_>$, $\epsilon>0$. Choose a $k\in \mathbb{N}$ such that $\frac{1}{k}<\frac{\epsilon}{4}$ and an $F\in \mathcal{F}$ such that $\mu(H\setminus F)=0$. Then $B(g,F,\frac{1}{k})\subseteq  B(g,H,\epsilon)$. As $D$ is dense in $\mathcal{M}_F(X,\mathcal{A},\mu)$, we can choose a point $h\in D\cap B(g,F,\frac{1}{k})$. It is not hard to check that $g\in B(h,F,\frac{1}{k})\subseteq B(g,H,\epsilon)$. 
\end{proof}
\begin{theorem}\label{5.4}
	For any measure space $(X,\mathcal{A},\mu)$ $$d(\mathcal{M}_F(X,\mathcal{A},\mu))\leq cn(\mathcal{A}) L(\mathcal{M}_F(X,\mathcal{A},\mu)).$$
\end{theorem}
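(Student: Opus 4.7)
The plan is to exploit the controlling family to reduce the Lindel\"{o}f covering data to a dense set, in close parallel with the proof of Theorem \ref{5.3}. Set $\kappa=cn(\mathcal{A})$ and $\lambda=L(\mathcal{M}_F(X,\mathcal{A},\mu))$; I will construct a dense subset of $\mathcal{M}_F(X,\mathcal{A},\mu)$ of cardinality at most $\kappa\cdot\lambda$.

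First, fix a controlling family $\mathcal{F}\subseteq\mathcal{A}_>$ with $|\mathcal{F}|\le\kappa$. For each pair $(F,n)\in\mathcal{F}\times\mathbb{N}$, the collection $\{B(f,F,\tfrac{1}{n}):f\in\mathcal{M}(X,\mathcal{A},\mu)\}$ is an open cover of $\mathcal{M}_F(X,\mathcal{A},\mu)$ (each $f$ lies in $B(f,F,\tfrac{1}{n})$), so by the Lindel\"of hypothesis there is a subcover $\{B(f^{F,n}_\alpha,F,\tfrac{1}{n}):\alpha<\lambda\}$ indexed by an ordinal $<\lambda$. Let
\[
D=\{f^{F,n}_\alpha:F\in\mathcal{F},\ n\in\mathbb{N},\ \alpha<\lambda\}.
\]
Then $|D|\le|\mathcal{F}|\cdot\aleph_0\cdot\lambda\le\kappa\lambda$ (using $\kappa,\lambda\ge\aleph_0$).

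The main step is to show $D$ is dense. Let $g\in\mathcal{M}(X,\mathcal{A},\mu)$ and let $B(g,H,\epsilon)$ be an arbitrary basic open neighborhood, where $H\in\mathcal{A}_>$ and $\epsilon>0$. Pick $n\in\mathbb{N}$ with $\tfrac{1}{n}<\epsilon$, and choose $F\in\mathcal{F}$ with $\mu(H\setminus F)=0$. Since the sets $B(f^{F,n}_\alpha,F,\tfrac{1}{n})$ cover the whole space, there exists $\alpha<\lambda$ with $g\in B(f^{F,n}_\alpha,F,\tfrac{1}{n})$; by symmetry of the basic neighborhoods in $(f,g)$ this is equivalent to $f^{F,n}_\alpha\in B(g,F,\tfrac{1}{n})$. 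I now verify the inclusion $B(g,F,\tfrac{1}{n})\subseteq B(g,H,\epsilon)$: if $h\in B(g,F,\tfrac{1}{n})$ with null set $A$ such that $|h-g|<\tfrac{1}{n}$ on $F\setminus A$, then setting $A'=A\cup(H\setminus F)$ one has $\mu(A')=0$ and $H\setminus A'\subseteq F\setminus A$, whence $|h-g|<\tfrac{1}{n}<\epsilon$ on $H\setminus A'$. Consequently $f^{F,n}_\alpha\in D\cap B(g,H,\epsilon)$, proving density.

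The only delicate point is the inclusion $B(g,F,\tfrac{1}{n})\subseteq B(g,H,\epsilon)$, which hinges on the controlling property $\mu(H\setminus F)=0$ to absorb the portion of $H$ outside $F$ into the null exceptional set; the rest is a bookkeeping argument with cardinal arithmetic.
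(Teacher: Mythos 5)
Your proof is correct and follows essentially the same route as the paper: fix a controlling family $\mathcal{F}$, take a Lindel\"of subcover of each cover $\{B(f,F,\tfrac{1}{n}):f\in\mathcal{M}(X,\mathcal{A},\mu)\}$, collect the centers into $D$, and use the symmetry of the basic sets together with $\mu(H\setminus F)=0$ to prove density. The only cosmetic difference is your choice of $\tfrac{1}{n}<\epsilon$ versus the paper's $\tfrac{1}{k}<\tfrac{\epsilon}{2}$; both suffice.
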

\begin{proof}
	Let $\mathcal{F}\subseteq \mathcal{A}_>$ be such that $|\mathcal{F}|\leq cn(\mathcal{A})$ and $\mathcal{F}$ controls the finite measure. Choose, $F\in \mathcal{F}$ and $m\in \mathbb{N}$. Then $\{B(f,F,\frac{1}{m}):f\in \mathcal{M}(X,\mathcal{A},\mu)\}$ is an open cover of $\mathcal{M}_F(X,\mathcal{A},\mu)$. It has a subcover $\gamma_{F,m}$ such that $|\gamma_{F,m}|=L(\mathcal{M}_F(X,\mathcal{A},\mu))$. Let $D=\{f\in \mathcal{M}(X,\mathcal{A},\mu):\text{ there exists }F\in \mathcal{F}\text{ and }m\in \mathbb{N} \text{ such that }B(f,F,\frac{1}{m})\in \gamma_{F,m}\}$. Since, $|D|\leq |\mathcal{F}|\aleph_0 L(\mathcal{M}_F(X,\mathcal{A},\mu))$, it suffices to show that $D$ is dense in $\mathcal{M}_F(X,\mathcal{A},\mu)$. Let $g\in \mathcal{M}_F(X,\mathcal{A},\mu)$, $H\in \mathcal{A}_>$, $\epsilon>0$. Since $\mathcal{F}$ controls the finite measure, there exists $F\in \mathcal{F}$ such that $\mu(H\setminus F)=0$. We find a $k\in \mathbb{N}$ such that $\frac{1}{k}<\frac{\epsilon}{2}$. Since $\gamma_{F,k}$ is a cover of $\mathcal{M}_F(X,\mathcal{A},\mu)$, there exists a member $B(h,F,\frac{1}{k})\in \gamma_{F,k}$ such that $g\in B(h,F,\frac{1}{k})$. Consequently, $h\in B(g,F,\frac{1}{k})\cap D\subseteq B(g,H,\epsilon)\cap D$. This proves the denseness of $D$ in $\mathcal{M}_F(X,\mathcal{A},\mu)$.
	
\end{proof}
\begin{theorem}\label{5.5}
	For a measure space $(X,\mathcal{A},\mu)$, $$d(\mathcal{M}_F(X,\mathcal{A},\mu))\leq cn(\mathcal{A})c(\mathcal{M}_F(X,\mathcal{A},\mu)).$$
\end{theorem}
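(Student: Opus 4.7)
The plan is to mimic the proof of Theorem \ref{5.4}, replacing the Lindelöf subcovers with maximal pairwise disjoint families produced by Zorn's lemma. First I would fix a subfamily $\mathcal{F}\subseteq \mathcal{A}_>$ controlling the finite measure with $|\mathcal{F}|\leq cn(\mathcal{A})$. For each pair $(F,m)\in \mathcal{F}\times \mathbb{N}$ I would apply Zorn's lemma to the collection of all sets of the form $B(f,F,\tfrac{1}{m})$ with $f\in \mathcal{M}(X,\mathcal{A},\mu)$, ordered by inclusion of subfamilies that are pairwise disjoint, to extract a maximal pairwise disjoint subfamily $\gamma_{F,m}=\{B(f_\alpha,F,\tfrac{1}{m}):\alpha\in I_{F,m}\}$. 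Since each $B(f_\alpha,F,\tfrac{1}{m})$ is a non-empty open subset of $\mathcal{M}_F(X,\mathcal{A},\mu)$, the definition of cellularity immediately gives $|\gamma_{F,m}|\leq c(\mathcal{M}_F(X,\mathcal{A},\mu))$.

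The candidate dense set is
\[
D=\{f_\alpha:(F,m)\in \mathcal{F}\times \mathbb{N},\; \alpha\in I_{F,m}\},
\]
for which $|D|\leq |\mathcal{F}|\cdot \aleph_0\cdot c(\mathcal{M}_F(X,\mathcal{A},\mu))\leq cn(\mathcal{A})\,c(\mathcal{M}_F(X,\mathcal{A},\mu))$. To verify density, I would take any basic open set $B(g,H,\epsilon)$ with $g\in \mathcal{M}(X,\mathcal{A},\mu)$, $H\in \mathcal{A}_>$ and $\epsilon>0$, choose $F\in \mathcal{F}$ with $\mu(H\setminus F)=0$, and pick $m\in \mathbb{N}$ with $\tfrac{2}{m}<\epsilon$. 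Maximality of $\gamma_{F,m}$ forces $B(g,F,\tfrac{1}{m})$ to meet some $B(f_\alpha,F,\tfrac{1}{m})\in \gamma_{F,m}$; selecting a common element $h$ and writing $|g-f_\alpha|\leq |g-h|+|h-f_\alpha|$ on $F$ modulo the two null sets of exceptional points, I would obtain $|g-f_\alpha|<\tfrac{2}{m}<\epsilon$ almost everywhere on $F$, and then almost everywhere on $H$ because $\mu(H\setminus F)=0$. Thus $f_\alpha\in D\cap B(g,H,\epsilon)$, establishing density.

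The only real subtlety is the bookkeeping of null sets when translating the geometric statement ``$B(g,F,\tfrac{1}{m})\cap B(f_\alpha,F,\tfrac{1}{m})\neq\emptyset$'' into the measure-theoretic inequality $|g-f_\alpha|<\tfrac{2}{m}$ almost everywhere on $F$; one must take the union of the two null sets coming from the respective memberships, and then absorb $H\setminus F$ into that null set to get the a.e.\ bound on $H$. Beyond this, the argument parallels Theorem \ref{5.4} step for step, with Zornified maximal disjoint families playing the role previously occupied by the Lindelöf subcovers, and no extra hypotheses on $\mu$ are required.
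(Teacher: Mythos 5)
Your proposal is correct and follows essentially the same route as the paper's own proof: a Zorn's-lemma-maximal cellular family of balls $B(f,F,\tfrac{1}{m})$ for each pair $(F,m)$, the set of centers as the candidate dense set, and the triangle inequality through a common point of two overlapping balls (with the null sets from $F$ and from $H\setminus F$ absorbed into one) to verify density. The only differences are cosmetic choices of constants ($\tfrac{2}{m}<\epsilon$ versus $\tfrac{1}{m}<\tfrac{\epsilon}{4}$).
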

\begin{proof}
	Let $\mathcal{F}\subseteq \mathcal{A}_>$ be a family of measurable sets, which control finite measure such that $|\mathcal{F}|=cn(\mathcal{A})$. For each $F\in \mathcal{F}$ and $k\in \mathbb{N}$, let $\mathcal{U}_F^k$ be a cellular family of basic open sets of the type $B(f,F,\frac{1}{k})$, the existence of such a family is guaranteed because we can take $\mathcal{U}_F^k=\{B(f,F,\frac{1}{k})\}$. A straight forward use of Zorn's Lemma ensures that there exists a maximal cellular family $\gamma_F^k$ for any $F\in \mathcal{F}$ and any $k\in \mathbb{N}$. Set $D=\{f\in \mathcal{M}(X,\mathcal{A},\mu):\text{ there exists }F\in \mathcal{F}\text{ and }k\in \mathbb{N}\text{ such that }B(f,F,\frac{1}{k})\in \gamma_F^k\}$. It is clear that $|D|\leq cn(\mathcal{A})c(\mathcal{M}_F(X,\mathcal{A},\mu))$. So, to complete this theorem it suffices to show that $D$ is dense in $\mathcal{M}_F(X,\mathcal{A},\mu)$. For that purpose let $g\in \mathcal{M}(X,\mathcal{A},\mu)$, $G\in \mathcal{A}_>$ and $\epsilon>0$. Since $\mathcal{F}$ controls the finite measure, there exists $F\in \mathcal{F}$ and $m\in \mathbb{N}$ such that $\mu(G\setminus F)=0$ and $\frac{1}{m}<\frac{\epsilon}{4}$. Therefore, $B(g,F,\frac{1}{m})\subseteq B(g,G,\epsilon)$. Now either $B(g,F,\frac{1}{m})\in \gamma_F^m$ or $B(g,F,\frac{1}{m})$ is outside of $\gamma_F^m$. In the second case, the maximality of $\gamma_F^m$ implies that, there exists $B(p,F,\frac{1}{m})\in \gamma_F^m$ such that $B(p,F,\frac{1}{m})\cap B(g,F,\frac{1}{m})\neq \emptyset$. Thus in any case, we can write $B(p,F,\frac{1}{m})\cap B(g,F,\frac{1}{m})\neq \emptyset$ for some $B(p,F,\frac{1}{m})\in \gamma_F^m$. We choose, $j\in B(p,F,\frac{1}{m})\cap B(g,F,\frac{1}{m}$). Then $|p-g|\leq|p-j|+|j-g|<\frac{\epsilon}{2}$ almost everywhere($\mu$) on $F$. Hence, $p\in B(g,G,\epsilon)\cap D$ (we recall that $\mu(G\setminus F)=0$).

\end{proof}
Since, the cellularity is always less than or equal to the density of a topological space, an immediate consequence of the above theorems is;
\begin{corollary}\label{5.6}
	For a measure space $(X,\mathcal{A},\mu)$, $$w(\mathcal{M}_F(X,\mathcal{A},\mu))=cn(\mathcal{A}) d(\mathcal{M}_F(X,\mathcal{A},\mu))=cn(\mathcal{A})L(\mathcal{M}_F(X,\mathcal{A},\mu))=cn(\mathcal{A})c(\mathcal{M}_F(X,\mathcal{A},\mu).)$$
\end{corollary}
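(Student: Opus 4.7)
My plan is to obtain all four equalities by chaining Theorems \ref{5.3}, \ref{5.4}, \ref{5.5} with the two standard cardinal inequalities $c(Y)\leq d(Y)\leq w(Y)$ and $L(Y)\leq w(Y)$, which hold for any topological space $Y$. The first equality $w(\mathcal{M}_F(X,\mathcal{A},\mu))=cn(\mathcal{A})\,d(\mathcal{M}_F(X,\mathcal{A},\mu))$ is literally the content of Theorem \ref{5.3}, so nothing more is needed there.

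To establish $cn(\mathcal{A})\,d(\mathcal{M}_F(X,\mathcal{A},\mu))=cn(\mathcal{A})\,L(\mathcal{M}_F(X,\mathcal{A},\mu))$, I would argue both inequalities. For ``$\geq$'', multiply Theorem \ref{5.4} by $cn(\mathcal{A})$ and invoke the fact that $cn(\mathcal{A})\geq\aleph_0$, so $cn(\mathcal{A})\cdot cn(\mathcal{A})=cn(\mathcal{A})$; this yields $cn(\mathcal{A})\,d\leq cn(\mathcal{A})\,L$. For ``$\leq$'', use $L\leq w$ and Theorem \ref{5.3} to get $cn(\mathcal{A})\,L\leq cn(\mathcal{A})\,w=cn(\mathcal{A})\cdot cn(\mathcal{A})\,d=cn(\mathcal{A})\,d$. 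The argument for $cn(\mathcal{A})\,d(\mathcal{M}_F(X,\mathcal{A},\mu))=cn(\mathcal{A})\,c(\mathcal{M}_F(X,\mathcal{A},\mu))$ is entirely analogous: Theorem \ref{5.5} multiplied by $cn(\mathcal{A})$ gives one direction, while the standard inequality $c\leq d$ (a fact already flagged in the sentence preceding the corollary) gives $cn(\mathcal{A})\,c\leq cn(\mathcal{A})\,d$, settling the other direction.

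The only place where any care is required is the use of the infinite-cardinal identity $\kappa\cdot\kappa=\kappa$ for $\kappa=cn(\mathcal{A})$, which is legitimate because the definition of $cn(\mathcal{A})$ explicitly builds in the summand $\aleph_0$. I do not expect any genuine obstacle here — the corollary is a purely formal rearrangement of Theorems \ref{5.3}--\ref{5.5} together with textbook inequalities among cardinal invariants, so the write-up will consist of displaying the chain of inequalities and noting the collapse provided by the infiniteness of $cn(\mathcal{A})$.
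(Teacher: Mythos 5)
Your proposal is correct and follows essentially the same route as the paper, which states the corollary as an immediate consequence of Theorems \ref{5.3}, \ref{5.4}, \ref{5.5} and the inequality $c\leq d$ (together with $L\leq w$ and the idempotence $cn(\mathcal{A})\cdot cn(\mathcal{A})=cn(\mathcal{A})$, which you rightly justify via $cn(\mathcal{A})\geq\aleph_0$). The only blemish is a harmless swap of the labels ``$\geq$'' and ``$\leq$'' in your second paragraph; the two inequalities you actually derive are the correct ones.
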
 

We conclude this paper with the following observations:
\begin{theorem}
	The following statements are equivalent for a measure space $(X,\mathcal{A},\mu)$:
	\begin{enumerate}
		\item $\mathcal{M}_F(X,\mathcal{A},\mu)$ is {second countable}.
		\item $\mathcal{M}_F(X,\mathcal{A},\mu)$ is {separable} and $\mu$ is a {hemifinite measure}.
		\item $\mathcal{M}_F(X,\mathcal{A},\mu)$ is {Lindel\"{o}f } and $\mu$ is a {hemifinite measure}.
		\item $\mathcal{M}_F(X,\mathcal{A},\mu)$ satisfies CCC condition ($\equiv$ countable chain condition) and $\mu$ is a {hemifinite measure}.
	\end{enumerate}
\end{theorem}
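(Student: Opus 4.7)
The proof proposal is essentially a direct harvesting of Corollary \ref{5.6} together with the observation that $cn(\mathcal{A})=\aleph_0$ precisely when $\mu$ is hemifinite. So my plan is to display the chain of implications $(1)\Rightarrow (2)\wedge (3)\wedge (4)$ by standard general topology, and then to recover $(1)$ from each of $(2),(3),(4)$ by plugging the relevant equality from Corollary \ref{5.6}.

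First I would handle $(1)\Rightarrow (2), (3), (4)$. Second countability of $\mathcal{M}_F(X,\mathcal{A},\mu)$ immediately implies first countability, whence by Theorem \ref{firstcounttable} the measure $\mu$ is hemifinite. Also, any second countable space is separable, Lindel\"of, and satisfies the countable chain condition, giving the remaining halves of $(2)$, $(3)$, $(4)$ at once.

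Next I would establish that each of $(2),(3),(4)$ implies $(1)$. The key observation is that hemifiniteness of $\mu$ is equivalent to $cn(\mathcal{A})=\aleph_0$, as already noted right after the definition of the controlling number. Corollary \ref{5.6} states
\[
w(\mathcal{M}_F(X,\mathcal{A},\mu))=cn(\mathcal{A})\,d(\mathcal{M}_F(X,\mathcal{A},\mu))=cn(\mathcal{A})\,L(\mathcal{M}_F(X,\mathcal{A},\mu))=cn(\mathcal{A})\,c(\mathcal{M}_F(X,\mathcal{A},\mu)).
\]
Under $(2)$, both $cn(\mathcal{A})=\aleph_0$ and $d(\mathcal{M}_F(X,\mathcal{A},\mu))=\aleph_0$, so the first equality yields $w(\mathcal{M}_F(X,\mathcal{A},\mu))=\aleph_0$, i.e., $(1)$. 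Under $(3)$, the second equality gives the same conclusion, and under $(4)$ the third does.

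There is no genuine obstacle here; the entire argument is bookkeeping on cardinal functions and the hemifiniteness $\Leftrightarrow cn(\mathcal{A})=\aleph_0$ dictionary. The only point to be slightly careful about is to invoke Theorem \ref{firstcounttable} (rather than arguing directly) for the implication ``second countable $\Rightarrow$ hemifinite'', so that the hemifiniteness half of $(2),(3),(4)$ is justified cleanly without re-deriving anything from Section 4.
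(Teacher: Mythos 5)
Your proposal is correct and follows essentially the same route as the paper, whose proof is simply the instruction to combine Corollary \ref{5.6} with Theorem \ref{firstcounttable} (via the dictionary $\mu$ hemifinite $\iff cn(\mathcal{A})=\aleph_0$) exactly as you have spelled out. The only cosmetic remark is that hemifiniteness under $(1)$ also falls out directly from $w=cn(\mathcal{A})\,d\geq cn(\mathcal{A})$ without passing through first countability, but invoking Theorem \ref{firstcounttable} as you do is equally valid.
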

\begin{proof}
	The proof follows as combining Corollary \ref{5.6},  Theorem \ref{firstcounttable}, Theorem \ref{5.3} and Theorem \ref{5.4}.
\end{proof}

\end{document}